\DeclareMathOperator{\sgn}{sgn}
\numberwithin{equation}{section}
\newcommand{\Hm}[1]{\leavevmode{\marginpar{\tiny
			$\hbox to 0mm{\hspace*{-0.5mm}$\leftarrow$\hss}
			\vcenter{\vrule depth 0.1mm height 0.1mm width \the\marginparwidth}
			\hbox to			0mm{\hss$\rightarrow$\hspace*{-0.5mm}}$\\\relax\raggedright #1}}}
\newtheorem{theorem}{Theorem}[section]
\newtheorem{proposition}[theorem]{Proposition}
\newtheorem{remark}[theorem]{Remark}
\newtheorem{lemma}[theorem]{Lemma}
\newtheorem{corollary}[theorem]{Corollary}
\newtheorem{definition}[theorem]{Definition}
\def\bu{{\bf{u}}}
\def\b0{{\bf 0}}
\def\bw{{\bf{w}}}
\def\bv{{\bf{v}}}
\def\bpsi{{\bm \psi}}
\def\bp{{\bm \varphi}}
\def\boldeta{{\bm \eta}}
\def\eps{\varepsilon}
\def\dg{\Delta_\Gamma}
\def\rr{\mathbb{R}}
\def\to{\rightarrow}
\def\um{\underline{M}}
\def\om{\overline{M}}
\begin{document}

\title[Convection-diffusion on star shaped graphs]{A convection-diffusion model on a star-shaped graph}

\author[C. M.  Cazacu]{Cristian M. Cazacu}
\address[C. M. Cazacu]{Faculty of Mathematics and Computer Science \& The Research Institute of the University of Bucharest (ICUB),  University of Bucharest\\
14 Academiei Street \\ 010014 Bucharest\\ Romania
\& Gheorghe Mihoc-Caius Iacob Institute of Mathematical
Statistics and Applied Mathematics of the Romanian Academy\\
050711 Bucharest, Romania}
\email{cristian.cazacu@fmi.unibuc.ro}

\author[L. I. Ignat]{Liviu I. Ignat}
\address[L. I. Ignat]{Institute of Mathematics ``Simion Stoilow'' of the Romanian Academy,
Centre Francophone en Math\'{e}matique\\
21 Calea Grivitei Street \\010702 Bucharest \\ Romania\\
\hfill\break\indent \and
\hfill\break\indent
  The Research Institute of the University of Bucharest (ICUB), University of Bucharest\\
90-92 Sos. Panduri, 5th District, Bucharest, Romania\\
}

\email{liviu.ignat@gmail.com}

\author{Ademir F. Pazoto}
\address[A. F. Pazoto]{Instituto de Matem\'atica, Universidade Federal do Rio de
Janeiro, P.O. Box 68530, CEP 21945-970, Rio de Janeiro, RJ,  Brasil}
\email{ademir@im.ufrj.br}

\author{Julio D. Rossi}
\address[J. D. Rossi]{
Dpto. de Matem{\'a}ticas, FCEyN,
Universidad de Buenos Aires, 1428, Buenos Aires,
Argentina. } \email{{\tt jrossi@dm.uba.ar} }

\begin{abstract}

 In this paper we study a convection-diffusion equation on a star-shaped graph composed by $n$ incoming edges and $m$ outgoing edges with a  nonlinearity $f\in C^1(\rr)$ satisfying some additional general conditions.  First, we prove the global well-posedness of the solutions of the system under consideration. Next, in the particular case that the nonlinear convection is given by $\partial_x(f(u(t, x))$ with
  $f(s)=-a|s|^{q-1}s$ with $q\geq 2$ and $a\in \rr$ verifying $(n-m)a\geq 0$, we analyze the long time behavior of the solutions.
 For $q> 2$ we find that the asymptotic behavior of the solutions is given by some self-similar profiles of the heat equation on the considered structure.
 In the case $q=2$, the nonnegative/nonpositive solutions converge to the self-similar profiles of Burgers' equation.
 Explicit representations of the limit profiles are obtained.
 \end{abstract}

\maketitle

{\textit{Keywords}: convection-diffusion equations on networks, global well-posedness, asymptotic behavior}

{\textit{Mathematics Subject Classification 2020}: 35R02, 35B40, 35A01, 35C06, 76R99}

\section{Introduction} \label{sect-intro}

The study of well-posedness and asymptotic behavior of time-evolution nonlinear partial differential equations (PDE) is a classical
and relevant topic which has been extensively studied in the last decades.
A list of related references is quite large and difficult to describe with precision in few lines.
We  refer to \cite{MR2656972, Karch2011} for reviews of the methods used to obtain the asymptotic profiles.

In this paper, we consider a PDE with linear diffusion and a nonlinear convective term in the ambient space of a network formed by the edges of a graph. Our goal is to prove the global well-posedness of the problem and to determine the asymptotic behavior
of the solutions for large times.  The bibliography on the study of equations on networks is also very vast. For asymptotic spectral analysis and further motivations of such equations we refer e.g. to \cite{MR2169126,MR3013208} and the references therein.

To be more specific, here we deal with a convection-diffusion model on a simple 1-d network (denoted by $\Gamma$)
formed by the edges of a star-shaped graph, which is composed by a
single junction with $n$ incoming infinite edges and $m$ outgoing infinite edges.
From the mathematical point of view we describe our network $\Gamma$ as  follows: the junction of the edges is settled  at $x=0$;  the $n$ incoming edges, indexed by $i\in \{1,\ldots,n\}$, are parameterized by the negative real axis $I_i=\mathbb{R}_{-}:=(-\infty,0]$ whereas the set of the $m$ outgoing edges, indexed by $j\in \{1, \ldots, m\}$,   are parametrized by the positive real axis $I_j=\mathbb{R}_{+}:=[0,\infty)$ (see Figure \ref{fig1}).
\begin{figure}[htbp]
	\begin{center}
		\includegraphics[scale=1]{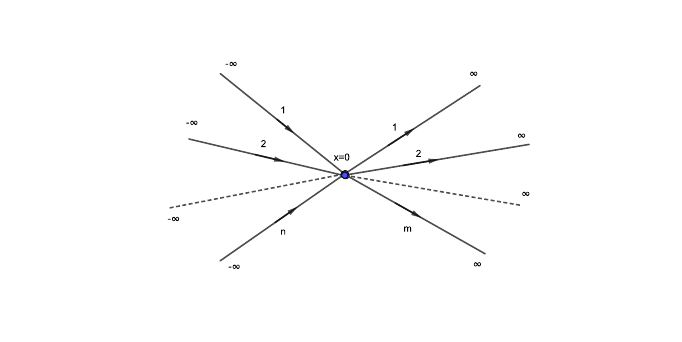}	
\caption{Star-shaped graph with $n$ incoming edges and $m$ outgoing edges at the junction point $x=0$.}
\label{fig1}
\end{center}
\end{figure}

On the structure $\Gamma=\cup_{k=1}^{n+m} I_k$  we consider a time evolution function  $\mathbf{u}:\rr_+ \times \Gamma \mapsto \mathbb{R}$
with $n+m$ components described by
\begin{equation}\label{car density}
u_i:\mathbb{R}_{+}\times I_i \rightarrow \rr\,\mbox{ and }\,\, u_j:\mathbb{R}_{+}\times I_j \rightarrow \rr,
\end{equation}
with $i\in\{1, \ldots, n\}$ and  $j\in \{n+1, \ldots, n+m\}$, which verify the following system of evolution equations in the edges plus coupling conditions at
$x=0$ and initial data
 \begin{equation}\label{main.eq}
	\left\{\begin{array}{ll}
		\partial_t u_i(t, x)+\partial_x(f(u_i(t, x))=\partial_{xx}u_i(t,x), \quad t>0,\ x<0,\  i\in \{1,\dots,n\}, \\[6pt]
		\partial_t u_j(t,x)+\partial_x(f(u_j(t,x))=\partial_{xx}u_j(t,x),\quad t>0,\ x>0,\  j\in \{n+1,\dots,n+m\},\\[6pt]
		u_i(t,0)=u_j(t,0), \qquad t>0, \ i,j\in\{1,\dots,n+m\},\\[6pt]
		\displaystyle \sum_{i=1}^n \big(f(u_i)-\partial_x u_i\big)(t,0)=
		 \sum _{j=n+1}^{n+m}\big(f(u_j)-\partial_x u_j\big)(t,0),\quad t>0,
		 \\[6pt]
			u_i(0,x)=u_{0i}(x), \qquad x<0,\  i\in \{1,\dots,n\},\\[6pt]
		u_j(0,x)=u_{0j}(x), \qquad x>0, \ j\in \{n+1,\dots,n+m\}.
	\end{array}\right.
\end{equation}

Let us remark that for smooth solutions vanishing at infinity the mass of the solution satisfies
\[
\frac{d}{dt}\Big(\sum_{i=1}^n \int_{-\infty}^0 u_i(t,x)dx +   \sum_{j=n+1}^{n+m} \int_{-\infty}^0 u_j(t,x)dx \Big)=
(n-m)f(0).
\]
So the system is conservative under the assumption
$$
(n-m)f(0)=0.
$$
This condition will appear several times in our analysis.

 We will denote the initial data as $\mathbf{u}_0=(u_{0k})_{k=1}^{n+m}$.
 Here, our analysis is devoted to two classical issues: first, to prove
  the global well-posedness of system \eqref{main.eq} for a general class of  functions $f\in C^1(\rr)$ and then
 to study the long time behavior of the solutions in the case $f(s)=-a|s|^{q-1}s$ with $q\geq 2$ and $a\in \rr$ such that $(n-m)a\geq 0$.
 Previous results regarding the well-posedness can be found in \cite{MR2679594} for nonlinearities
 $f\in C_c^2((0,1))$.  For a comparison of our results
 with the previous ones we refer to Section \ref{mainresults}.

{\bf Functional framework.}
In order to  rigorously describe the main results of the paper we need to establish the
functional framework in which we carry on our analysis.  As expected, Sobolev spaces plays a crucial role.
We follow the terminology in \cite[Section 1.3]{MR3013208}.
We introduce  $C(\Gamma)$ the space of continuous functions on $\Gamma$.
A function $\bu=(u_1,\dots, u_{m+n})$  is continuous on $\Gamma$ if and only if $u_k$ is continuous on $I_k$ and its components take the same value at  the common vertice, that is,
$$u_i(0)=u_j(0), \quad \forall \ i,j=1,\dots, m+n.$$
In a similar way we can define $C^l(\Gamma)$, their components are $C^l$ on each edge and their derivatives takes the same values at the common vertex. The functions $C_c^l(\Gamma)$ are the functions which vanishes outside a compact set of   graph  $\Gamma$.
For the  half lines $I_k$  described above, we consider the Hilbert space
$L^2(\Gamma)=\prod_{k=1}^{n+m} L^2(I_k)$
with the inner product
$$(\bu,\bv)_{L^2(\Gamma)}=\displaystyle\sum_{k=1}^{n+m}\int_{I_k} u_k v_k dx,$$
for $\bu=(u_1,\ldots,u_{m+n}), \bv=(v_1,\ldots,v_{m+n})\in L^2(\Gamma)$. Then, by $\widetilde H^l(\Gamma)$, we understand
the Hilbert space
$\widetilde H^l(\Gamma)=\bigoplus_{k=1}^{n+m} H^l(I_k)$
endowed with its natural norm. We also introduce the space  $H^1(\Gamma)=\widetilde H^1(\Gamma)\cap C(\Gamma)$:
\[
H^1(\Gamma)=\{\bu\in \widetilde H^1(\Gamma) ; \ u_i(0)=u_j(0), \ \forall\, 1\leq i,j\leq m+n\}.
\]
 This is a Hilbert space endowed with the inner product induced by the one on $H^1(\Gamma)$:
$$(\bu,\bv)_{H^1(\Gamma)}=\displaystyle\sum_{k=1}^{n+m}\int_{I_k} u_k v_k dx + \int_{I_k} \partial_x u_{k} \partial_x v_{k} dx.$$
The above definitions extends to any graph $\Gamma'$ with finitely many edges.  
In general for a graph $\Gamma$ we define $H_0^1(\Gamma)$ the closure of $C^1_c(\Gamma)$ in the $H^1(\Gamma)$-norm. Its dual $(H_0^1(\Gamma))'$ will be denoted by $H^{-1}(\Gamma )$.
These spaces have similar embedding properties to the classical real line case \cite[Section 3.2]{mugnolo}.

We introduce the Laplace operator $\dg$ on the network $\Gamma$ as follows:
$\dg:D(\dg) \subset L^2(\Gamma) \rightarrow L^2(\Gamma)$
given by
\begin{equation}\label{A}
\begin{cases}
\dg \bu = (\partial_{xx}u_{1},\dots, \partial_{xx}u_{m+n}), \,\,\mbox{ where } \bu=(u_1,\ldots,u_{m+n}),\\[5pt]
D(\dg) =\Big\{\bu\in \widetilde H^2(\Gamma) ; u_i(0)=u_j(0),\forall\, 1\leq i,j\leq m+n,\, \displaystyle\sum_{i=1}^{n}\partial_xu_{i}(0)=\sum_{j=n+1}^{n+m}\partial_xu_{j}(0)\Big\}.
\end{cases}
\end{equation}
It is easy to check that $\Delta_\Gamma$ is a self-adjoint operator.
The quadratic form associated to the operator $\Delta_\Gamma$ is
\[
\mathcal{E}(\bu,\bu)=\int_{\Gamma} |\partial_x \bu|^2 dx=\sum_{k=1}^{m+n}\int_{I_k}|\partial_x u_{k}|^2dx,
\]
with domain $D(\mathcal{E})=H^1(\Gamma)$.
Since $\mathcal{C}^{\infty}_c(I_k)$ is dense in $L^2(I_k)$ and  $\prod_{k=1}^{n+m}\mathcal{C}^{\infty}_c(I_k)\subset D(\dg)$
it follows that $D(\dg)$ is dense in $L^2(\Gamma)$. Moreover, $(\dg,D(
\dg))$ is a closed operator.

 Notice that we have used the notation $\partial_x u$ in the definitions of the operator $(\dg,D(
	\dg))$ and its associated quadratic form.   In principle, this is an abuse of notation because  it refers to the derivative of a function with one variable argument (which is usually denoted by $u^\prime$).
	Our notation becomes useful when used in the context of the evolution system \eqref{main.eq}
	where two different variables,  $t$ and $x$, appear in the argument of $u$.

\section{Main results}\label{mainresults}

Let us state our main results. To simplify the presentation we will denote $V=H^1(\Gamma)$ and  we also denote by $V'$ the dual of $V$.

First, we have the following general global well-posedness result.
\begin{theorem}[Global well-posedness]\label{well-posedness-L2} Let  $f\in C^1(\rr)$
satisfying
\begin{equation}
\label{h1}
(n-m)\limsup _{x\rightarrow -\infty}f(x)\geq 0\geq (n-m)\liminf _{x\rightarrow \infty}f(x).
\end{equation}
For any $\bu_0=(u_{0k})_{k=1}^{n+m}\in L^2(\Gamma)\cap L^\infty(\Gamma)$ 	there exists a unique weak solution $\bu=(u_k)_{k=1}^{n+m}$
	of problem \eqref{main.eq} (in the sense of \eqref{weak-solution}) which satisfies
	\[
	\bu\in C([0,\infty);L^2(\Gamma))\cap L^\infty((0,\infty)\times \Gamma)\cap L^2((0,\infty);V).
	\]
	Moreover, if $\um\leq u_{0k}(x)\leq \om$ for all  $k\in \{1, \ldots, n+m\}$, for some constants $\um  \leq 0\leq\om$, satisfying
\begin{equation}
\label{hM}
(n-m) f(\um)\geq 0\geq (n-m)f(\om),
\end{equation}	
then
	\[
	\um\leq u_k(t,x)\leq \om, \quad\forall t>0,\   x\in I_k, \  1\leq  k\leq   n+m.
	\]
\end{theorem}

When the initial datum is also integrable, as a consequence of Theorem \ref{well-posedness-L2}, we obtain the following result.

\begin{theorem}\label{well-posedness} Let  $f\in C^1(\rr)$ satisfying \eqref{h1}.
	For any $\bu_0\in L^1(\Gamma)\cap L^\infty(\Gamma)$ there exists a unique solution $\bu=(u_i)_{i=1}^{n+m}$
 of problem \eqref{weak-solution} such that $
\bu\in C([0,\infty);L^1(\Gamma))\cap L^\infty((0,\infty)\times \Gamma)\cap L^2((0,\infty);V)
 $.
Moreover,  the solution satisfies the following properties:

i) $L^1$-Stability.
Consider two different initial data $(u_{l0})_{l=1}^{n+m}$ and $(\tilde u_{l0})_{l=1}^{n+m}$ and
$\bu=(u_l)_{l=1}^{n+m}$ and $\widetilde{\bu}=(\tilde{u}_l)_{l=1}^{n+m}$ the corresponding solutions.
Then
 \begin{align}
 \nonumber	\sum _{i=1}^n \|u_{i}(t)-\tilde u_i(t)\|_{L^1(\rr_-)}&+\sum _{j=n+1}^{n+m} \|u_{j}(t)-\tilde u_j(t)\|_{L^1(\rr_+)}\\
 	\leq \sum _{i=1}^n& \|u_{i0}-\tilde u_{i0}\|_{L^1(\rr_-)}+\sum _{j=n+1}^{n+m} \|u_{j0}-\tilde u_{j0}\|_{L^1(\rr_+)}.
 \end{align}

 ii) Mass conservation. Assume that $(n-m)f(0)=0$. Then
		 \[
		 \int_\Gamma \bu(t,x)dx=\int _\Gamma \bu_{0}(x)dx, \quad \forall t\geq 0.
		 \]
 \end{theorem}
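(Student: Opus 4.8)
The plan is to deduce the statement from Theorem \ref{well-posedness-L2} by upgrading the functional setting from $L^2$ to the whole $L^p$-scale. First I would note the embedding $L^1(\Gamma)\cap L^\infty(\Gamma)\hookrightarrow L^2(\Gamma)\cap L^\infty(\Gamma)$, since $\|\bu_0\|_{L^2(\Gamma)}^2\le\|\bu_0\|_{L^\infty(\Gamma)}\|\bu_0\|_{L^1(\Gamma)}$; thus Theorem \ref{well-posedness-L2} already provides a unique weak solution with $\bu\in C([0,\infty);L^2(\Gamma))\cap L^\infty((0,\infty)\times\Gamma)\cap L^2(0,\infty;V)$ and $\um\le u_k\le\om$. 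To enter the $L^1$-framework I would propagate the bound $\|\bu(t)\|_{L^1(\Gamma)}\le\|\bu_0\|_{L^1(\Gamma)}$ along the Lipschitz-approximation scheme \eqref{aprox}: the heat flow preserves $L^1(\Gamma)$ and each Duhamel iterate stays integrable, so the $p=1$ instance of Corollary \ref{particular.apriori}(iii) applies to the truncated problems and passes to the limit. In particular $\bu(t)\in L^1(\Gamma)$ for every $t\ge0$.

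The core of the proof is the contraction i). Given two solutions $\bu,\tilde\bu$, set $\beta=\bu-\tilde\bu$; subtracting the weak formulations \eqref{weak-solution} yields
\[
\langle\partial_t\beta,\bpsi\rangle_{V',V}+(\partial_x\beta,\partial_x\bpsi)_{L^2(\Gamma)}=(f(\bu)-f(\tilde\bu),\partial_x\bpsi)_{L^2(\Gamma)},\qquad\forall\,\bpsi\in V.
\]
Both solutions being continuous at the junction, so is $\beta$, hence $\bpsi=\theta_\eps'(\beta)$ — with $\theta_\eps$ the even convex approximation of $|\cdot|$ from the proof of Corollary \ref{particular.apriori}, so that $\theta_\eps'(0)=0$ — is an admissible element of $V$. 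Writing $f(\bu)-f(\tilde\bu)=a\,\beta$ with $|a|\le L$, the Lipschitz constant of $f$ on $[\um,\om]$, and $\partial_x\bpsi=\theta_\eps''(\beta)\partial_x\beta$, I would split the convective term by Young's inequality against the dissipation to obtain
\[
\frac{d}{dt}\int_\Gamma\theta_\eps(\beta)\,\dx\le\frac{L^2}{2}\int_\Gamma\beta^2\,\theta_\eps''(\beta)\,\dx\le\frac{L^2}{2}\int_\Gamma|\beta|\,\mathbf{1}_{\{|\beta|\le2\eps\}}\,\dx,
\]
no junction term appearing because I use the weak form directly rather than integrating the convective term by parts. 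As $\beta(t)\in L^1(\Gamma)$, the right-hand side tends to $0$ as $\eps\to0$ by dominated convergence; integrating in time and letting $\eps\to0$ gives $\|\beta(t)\|_{L^1(\Gamma)}\le\|\beta(0)\|_{L^1(\Gamma)}$, i.e. i) for $p=1$. Applied to the time-translate $\tilde\bu(\cdot)=\bu(\cdot+h)$ this yields continuity of $t\mapsto\bu(t)$ in $L^1(\Gamma)$, and the interpolation $\|\cdot\|_{L^p}\le\|\cdot\|_{L^1}^{1/p}\|\cdot\|_{L^\infty}^{1-1/p}$ together with the uniform $L^\infty$-bound then gives $\bu\in C([0,\infty);L^p(\Gamma))$ for all $1\le p<\infty$; the general-$p$ form of i) follows from the same convex-function test, the case $p=1$ being the essential one.

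For ii) I would test \eqref{weak-solution} against a cut-off $\bpsi=\varphi_R\in V$ equal to $1$ on $\{|x|\le R\}$, supported in $\{|x|\le2R\}$, continuous at the junction and with $|\partial_x\varphi_R|\le C/R$, and integrate in time:
\[
\int_\Gamma(\bu(t)-\bu_0)\varphi_R\,\dx=\int_0^t\big[(f(\bu),\partial_x\varphi_R)_{L^2(\Gamma)}-(\partial_x\bu,\partial_x\varphi_R)_{L^2(\Gamma)}\big]\,\dt.
\]
As $R\to\infty$ the left-hand side tends to $\int_\Gamma(\bu(t)-\bu_0)\,\dx$ because $\bu(t),\bu_0\in L^1(\Gamma)$. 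On the right, the diffusive term is $\le\frac{C}{\sqrt R}\|\partial_x\bu\|_{L^2((0,t)\times\Gamma)}\to0$, while the convective contribution converges to $(n-m)f(0)$, which vanishes in the cases at hand — automatically when $n=m$ under (H\ref{h1}), and because $f(0)=0$ under (H\ref{h2})-(H\ref{h3}), where it is moreover dominated by $\frac{CL}{R}\int_{R\le|x|\le2R}|\bu|\,\dx\to0$. Hence $\int_\Gamma\bu(t)\,\dx=\int_\Gamma\bu_0\,\dx$.

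The step I expect to be most delicate is the $L^1$-theory underpinning everything: securing $\bu(t)\in L^1(\Gamma)$ without circularity — it is needed both for the $\eps\to0$ passage in the contraction and for the limit in the mass identity — and justifying that $\int_\Gamma|\beta|\,\mathbf{1}_{\{|\beta|\le2\eps\}}\,\dx\to0$ in a way that survives the time integration. By comparison, the far-field flux control in ii) is routine once $f(0)=0$ (or $n=m$) is in force.
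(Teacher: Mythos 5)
Your overall architecture coincides with the paper's: deduce existence and uniqueness from Theorem \ref{well-posedness-L2} via $L^1(\Gamma)\cap L^\infty(\Gamma)\subset L^2(\Gamma)\cap L^\infty(\Gamma)$, propagate the $L^1$ bound, prove the contraction by testing with a convex approximation of $|\cdot|$, and obtain mass conservation from a spatial cut-off. Your direct proof of the $L^1$-contraction — testing the difference equation with $\theta_\eps'(\beta)\in V$, absorbing the convective term by Young's inequality against the dissipation, and sending $\frac{L^2}{2}\int_\Gamma|\beta|\,{\bf 1}_{\{|\beta|\le2\eps\}}\dx\to0$ by dominated convergence — is sound and is in fact more self-contained than the paper, which at this point only cites \cite{MR2679594}; you are also right that no junction term appears because the convective term is never integrated by parts.

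The genuine weak point is the one you yourself flag: the $L^1$-propagation. Corollary \ref{particular.apriori}(iii) with $p=1$ has $\bu\in C([0,T];L^1(\Gamma))$ as a \emph{hypothesis}, and the iterates of \eqref{aprox} are produced by Proposition \ref{lions}, a purely $L^2$-variational statement; there is no Duhamel representation in the construction (the paper's final section explains precisely why the semigroup route is avoided), so ``each Duhamel iterate stays integrable'' is not available as stated and the appeal to Corollary (iii) at $p=1$ is circular. The paper breaks the circle by localizing in space rather than by going back to the iteration: it tests the limit solution with $\rho_\eps'(\bu)\psi_R$, $\psi_R(x)=\psi(x/R)$, for which every quantity is finite a priori because $\rho_\eps(\bu)\le C_\eps\bu^2\in L^1(\Gamma)$; the terms carrying $\partial_x\psi_R$ are $o(1)$ as $R\to\infty$, giving $\int_\Gamma\rho_\eps(\bu(t))\psi_R\dx\le\|\bu_0\|_{L^1(\Gamma)}+o(1)$ and hence $\bu(t)\in L^1(\Gamma)$ by Fatou. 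You should adopt this device (it is the same cut-off you already use for ii)). Two further remarks: the claim that the general-$p$ form of i) ``follows from the same convex-function test'' does not hold — testing with $|\beta|^{p-2}\beta$ forces a Young inequality and yields only a Gronwall bound $e^{Ct}$, not a contraction; the estimate is genuinely an $L^1$ statement, consistent with the paper's heading. Finally, your observation that the convective tail in ii) contributes $(n-m)f(0)$ is correct and worth retaining: it shows mass conservation requires $n=m$ or $f(0)=0$, a point the paper's one-line proof passes over for hypothesis (H\ref{h4}).
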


The precise notion of weak solutions for system \eqref{main.eq} will be introduced and analyzed in Subsection \ref{weak-sol-section}.

  Particular nonlinearities satisfying \eqref{h1} are   $f(s)=\pm a |s|^{q-1}s$  where $\pm (n-m)a\leq 0$. The nonlinearities $f(s)=|s|^p$ with $p>1$ are not covered for all initial data in $L^2(\Gamma)\cap L^\infty(\Gamma)$. However the same analysis works for nonnegative (nonpositive)
  solutions if $n-m\leq  0$ (respectively $n-m \geq 0$).

  Other type of nonlinearities  were considered in \cite[Th. 1.2]{MR2679594} where the authors studied the case of a bounded initial datum lying between $0$ and $1$ and a nonlinearity
  $f\in C_c^2((0,1))$.
  The results in \cite{MR2679594} still hold if one replaces $(0,1)$ by any finite interval $(a,b)$.
   Our Theorem \ref{well-posedness-L2} applies for more general situations.
In contrast with the analysis in \cite{MR2679594}, based on the semigroups approach, for the proof of Theorem \ref{well-posedness-L2} we use an
iterative method in the weak formulation \eqref{weak-solution} of the problem.

The semigroup method in \cite{MR2679594}
  uses an iterative strategy as in \cite[Appendix B]{MR3443431}.
An important step in the existence proof in \cite{MR2679594} is that the semigroup $S(t)$ generated by the m-dissipative operator $\Delta_\Gamma$ commutes with the derivative    $\partial_x$,  that is $  S(t) \partial_x f = \partial_x (S(t) f)$, (strategy also used for the Cauchy problem in   \cite{MR1124296,MR3443431}),   which is not true for the type of problems we consider here (see Appendix \ref{semigroup-approach} and \cite[Lemma 2.1]{MR3834707}). Hence, we cannot use the classical strategy valid in the whole space $\rr^N$ that is based on a fix point argument in the variation of constants formula to prove the local existence of solutions (as done in \cite{MR1124296}). On the other hand, a rigorous proof  using the semigroup approach of the existence of solutions for a nonlinear problem, the KdV equation, on a star-shaped network is given in \cite{MR3799048}.

 Once the well-possedness of our problem is settled, we focus our attention in the asymptotic behavior of the solutions.
 Next, we prove that for some particular nonlinearities $f$ the solutions to the system \eqref{main.eq} behave as some
	self-similar profiles
	when time goes to infinity. In this regard, our main results are the following. First, we prove some useful estimates.

\begin{theorem}\label{decay.estimates}
	Let $f(s)=-a |s|^{q-1}s$, with $q\geq 2$, and $a\in \rr$ such that  $(n- m)a\geq 0 $.
	For any initial datum $\bu_0\in L^1(\Gamma)\cap L^\infty(\Gamma)$
	there exists a unique global weak solution $
\bu\in C([0,\infty);L^1(\Gamma))\cap L^\infty((0,\infty)\times \Gamma)\cap L^2((0,\infty);V)
 $ of system \eqref{main.eq} in the sense of \eqref{weak-solution}. Moreover, the solution satisfies
  	\begin{enumerate}
	\item  Mass conservation,
		 \[
		 \int_\Gamma \bu(t,x)dx=\int _\Gamma \bu_{0}(x)dx, \quad \forall t\geq 0.
		 \]
		 \item Energy estimate: for any $0\leq t_1<t_2<\infty$ we have
		 \[
		 \int_\Gamma \bu^2(t_2,x)dx+ 2 \int _{t_1}^{t_2}\int_\Gamma |\partial_x\bu|^2(t,x)dx
		 \leq  \int_\Gamma \bu^2(t_1,x)dx.
		 \]
		 \item Decay estimate: for any $1\leq p\leq \infty$ it holds that
		 \[
		 \|\bu(t)\|_{L^p(\Gamma)}\leq C(p) \|\bu_0\|_{L^1(\Gamma)}t^{-\frac 12(1-\frac 1p)}\quad \forall t> 0.
		 \]
	\end{enumerate}
\end{theorem}

\begin{remark}\label{l1solutions} 
When the initial data is only in $L^1(\Gamma)$ a more detailed analysis as in \cite{MR1124296}, \cite[Th.~4.7, p.~35]{zuazua2020asymptotic} shows the existence of a unique solution $\bu\in C([0,\infty),L^1(\Gamma))\cap L^\infty_{loc}((0,\infty),L^\infty(\Gamma))$. It is a consequence of the $L^1(\Gamma)$-contraction property and an approximation argument. Indeed, choose $u_{0n}\in L^1(\Gamma)\cap L^\infty(\Gamma)$ such that $u_{0n}\rightarrow u_0$ in $L^1(\Gamma)$. The corresponding solutions satisfy  $
\bu_n\in C([0,\infty);L^1(\Gamma))\cap L^\infty((0,\infty)\times \Gamma)\cap L^2((0,\infty);V).
 $ 
The $L^1(\Gamma)$-contraction property shows that $(u_n)_{n\geq 1}$ is a Cauchy sequence in $C([0,\infty);L^1(\Gamma))$. Its limit $\bu$ has the same   decay properties as  $\bu_n$, in particular the estimate in $L^\infty(\Gamma)$.
 Moreover, for any $\tau>0$ the solution $\bu$ belongs to $L^2((\tau,\infty);V)$ and $\partial_t\bu\in L^2((\tau,\infty);V')$. It satisfies the weak formulation in  \eqref{weak-solution} and takes $u_0$ as initial datum. For the interested reader the full details are the same as in  \cite[Th.~4.7, p.~35]{zuazua2020asymptotic}.
 \end{remark}

\begin{theorem}[Asymptotic behavior for large times]\label{first.term}  Let $f(s)=-a |s|^{q-1}s$, with $q\geq 2$, and $a\in \rr$ such that  $(n- m)a\geq 0 $.
	For any initial datum $\bu_0\in L^1(\Gamma)\cap L^\infty(\Gamma)$
	denoting by $M$ the total mass of the initial  datum, i.e.,
	$$M=\int_\Gamma \mathbf{u}_0(x)dx:=\sum_{i=1}^{n} \int_{-\infty}^{0} u_{0i}(x) dx+  \sum_{j=n+1}^{n+m} \int_{0}^{\infty} u_{0j}(x)dx,$$
	we have that the solution of system \eqref{main.eq} satisfies:
	\begin{enumerate}
	\item[i)] if $q>2$
	\begin{equation}
	\label{q>2}
	t^{\frac 12(1-\frac 1p)}\| \bu(t)-\bu_M(t)\|_{L^p(\Gamma)}\rightarrow 0, \ \text{as}\ t\rightarrow \infty, \ 1\leq p<\infty,
	\end{equation}
	where $\bu_M(t)=(u_{M,k}(t))_{k=1}^{m+n}$ is given by
	\[
	u_{M,k}(t,x)=\frac{2M}{m+n}\frac {1}{\sqrt {4\pi t}}e^{-\frac{x^2}{4t}},
	\]
	\item[ii)] if $q=2$ and $\bu_0$ is nonnegative (or nonpositive) then
	\begin{equation}
	\label{q=2}
	t^{\frac 12(1-\frac 1p)}\| \bu(t)-\bu_M(t)\|_{L^p(\Gamma)}\rightarrow 0, \ \text{as}\ t\rightarrow \infty, \ 1\leq p<\infty,
	\end{equation}
	where   $\bu_M(t)=(u_{M,k}(t))_{k=1}^{m+n}$ is given by
	\[
	u_{M,k}(t,x)=\frac 1{a\sqrt t}G_{aM}(\frac x{\sqrt {t}})\]
	 where for a given parameter $M$
	\begin{equation}
\label{perfil.fm.intro}
  G_M(y)= \frac{\alpha_{n,m,M} e^{-\frac{y^2}4}}{1+\alpha_{n,m,M}\int_{-\infty}^y e^{-s^2/4}ds}
\end{equation}
	and $\alpha_{n,m,M} $ is the constant obtained as the unique solution in the interval $ (-\frac 1{2\sqrt \pi}, \infty)$
	of the equation
	\[
	 |1+\alpha \sqrt{ \pi} |^{n-m}|1+2\alpha \sqrt\pi|^m=e^M.
	\]
	\end{enumerate}
\end{theorem}
%

 Now, let us   comment on the hypotheses of Theorem \ref{first.term}.
The condition   $(n-m)a\geq 0$ in Theorem \ref{decay.estimates} and  Theorem  \ref{first.term} is imposed in order to guarantee the global existence of the solutions. However, we do not
know if this assumption is merely technical or  if solutions blow-up when  $(n-m)a< 0$.

 The case $q=2$ when the initial datum changes sign
remains to be treated elsewhere. To deal with this case will be necessarily to prove the uniqueness of the solutions of the system \eqref{main.eq} with initial  datum  $\delta_0$ taken in the sense of bounded measures (see Section \ref{self-similar} for a precise definition).
Similar difficulties appeared previously in the case of the whole space where the uniqueness of the profiles was addressed in a series of papers.  We quote \cite{MR1233647} for nonnegative/nonpositive solutions on the real line, \cite{MR1266100} for multi-dimensional case and \cite{MR1440033} for changing sign solutions.

The decay of solutions of linear diffusion problems on graphs has been analyzed previously in \cite{MR2291812} for compact graphs and in \cite{haeseler} in the case when some infinite edges are attached to the compact part of the graph. The asymptotic expansion of the solutions for these  linear problems has been done for general graphs in \cite{rossi2020}.
We expect that the results of this paper can be extended to   connected finite graphs in which some of the edges have infinity length. In this case, depending on the nonlinearity, additional restrictions on the number of incoming and outgoing edges at any internal node have to be imposed, for example that always the number of incoming edges is greater or equal than the number of outgoing edges. This will be analyzed in the future.

Let us now comment on the ideas and methods used in the proofs of Theorems \ref{well-posedness-L2}-\ref{first.term}.
The ideas involved in the proof of Theorem \ref{well-posedness-L2} can be sketched as follows: we will first obtain a global well-possedness result when $\bu_0\in L^2(\Gamma)$ and the nonlinearity $f$ is Lipschitz. Then, combining this result with a priori estimates and some classical arguments in conservation laws (see, for instance, \cite[p.~60]{MR1304494}) we can extend the global well-possedness to the case of  initial data in $L^2(\Gamma)\cap L^\infty(\Gamma)$ and a $C^1(\rr)$ nonlinearity.
Using apriori estimates we establish well-posedness results for $L^1(\Gamma)\cap L^\infty(\Gamma)$ initial data.
In the particular case when $f(s)=-a |s|^{q-1}s$ by energy estimates we obtain qualitative properties of solutions that allow us to deal with $L^1(\Gamma)$ solutions and to establish in Theorem \ref{first.term} the long time behavior of the solutions.
We will prove Theorem \ref{first.term} by using a scaling method, i.e., we
introduce a family of scaled solutions $\{u^\lambda\}_{\lambda > 1}$ and reduce the asymptotic expansion property \eqref{q>2}
to the strong convergence of the scaled family $\{u^\lambda\}_{\lambda > 1}$. We will make use of
energy-type estimates which provide uniform bounds, with respect to $\lambda$, of the scaled solutions and allow
us to pass the limit by using the Aubin-Lions compactness criterium. We refer to \cite{MR2656972,Karch2011} for a review of the
scaling method. The main difficulty in the case $q=2$ is to prove the uniqueness of the solutions of the limit equation.

The paper is organized as follows: Section \ref{well-posed} deals with technical a priori estimates for the solutions and the global well-possedness result.
In Section \ref{self-similar} we discuss the existence and uniqueness of the self similar profiles that are used to characterize the long time behavior of the solutions. Section \ref{asymptotic behavior} is devoted to prove our main result stated in Theorem \ref{first.term}. Finally, in Appendix \ref{semigroup-approach} we explain the difficulties in using the semigroup approach for the problem addressed here.

\section{Well-possedness}\label{well-posed}

\subsection{Weak solutions}\label{weak-sol-section}
First we introduce  the notion of weak solution to our system \eqref{main.eq}.
Let us assume that we have a classical solution $\bu=(u_1,\ldots,u_{n+m})$ of problem \eqref{main.eq} on the time interval $[0,T]$, i.e. all the components $u_k$, $k=1,\dots, n$, are $C^{1,2}([0,T]\times I_k)$  and   together with their derivatives decay to zero at infinity.
Take $\bpsi=(\psi_1,\ldots,\psi_{n+m}) \in C_c^2(\Gamma)$, i.e. a  function with all the components of class $C^2$ on the edge where they are  defined and vanishing outside of a compact set. 
We multiply the equation in \eqref{main.eq} by $\bpsi$ and integrate, we obtain that
\begin{align*}
 (\partial_t\bu, \bpsi)_{L^2(\Gamma)}
& =\sum_{i=1}^n \int_{-\infty}^0 (\partial_{xx}u_i - \partial_x (f(u_i))\psi_i dx
+\sum_{j=n+1}^{n+m} \int_0^\infty (\partial_{xx}u_j - \partial_x (f(u_j))\psi_j dx\\
&=\sum_{i=1}^n (\partial_x u_i - f(u_i))(0)\psi_i(0)-\sum_{j=n+1}^{n+m} (\partial_x u_j - f(u_j))(0)\psi_j(0)\\
&\qquad
 - (\partial_x \bu, \partial_x\bpsi)_{L^2(\Gamma)} + (f(\bu),\partial_x\bpsi)_{L^2(\Gamma)},
\end{align*}
where $f(\bu)$ is the vector defined by $f(\bu)=(f(u_1), \ldots, f(u_{n+m}))$.
Using  that $\bpsi=(\psi_1,\dots, \psi_{n+m})$ is continuous at $x=0$, i.e.  $\psi_i(0)=\psi_j(0):=\bpsi(\b0)$ for all $1\leq i,j\leq n+m$,  we obtain
 \[
 \left(\partial_t\bu,\bpsi\right)_{L^2(\Gamma)} + (\partial_x \bu, \partial_x\bpsi)_{L^2(\Gamma)} = (f(\bu),\partial_x\bpsi)_{L^2(\Gamma)}.
 \]
By a density argument the above representation  holds for all $\bpsi\in V$. This means that a smooth solution $\bu$ is in fact a ``{\it weak solution}'' in the following sense:
\begin{definition}[weak solutions]\label{def1}
Let $T>0$. We say that a function $\bu= (u_{k})_{k=1}^{n+m}$ with the components $u_k :(0,  T)\times I_k  \rightarrow \mathbb{R}$ is a weak solution of \eqref{main.eq}
with $\bu_0\in L^2(\Gamma)$.  if
\begin{equation}
\label{weak-solution}	
\left\{
 \begin{aligned}
& \bu\in L^2(0,T; V),  \partial_t\bu\in L^2(0,T; V'),  \\
&  \mbox{for a.e.}\  t \  \text{in} \, (0,T), \text{the following holds for all } \bpsi\in V\\
&\left\langle\partial_t\bu(t, \cdot),\bpsi\right\rangle_{V',V} + (\partial_x \bu(t, \cdot), \partial_x\bpsi)_{L^2(\Gamma)} = (f(\bu(t, \cdot)) ,\partial_x\bpsi)_{L^2(\Gamma)},\\
&\bu(0, \cdot)=\bu_0(\cdot).
\end{aligned}
\right.
 \end{equation}
If \eqref{weak-solution} holds  for any $T>0$ we say that the solution is global.
 \end{definition}

 Since  $V$ is dense and continuous embedded in $L^2(\Gamma)$ following classical arguments (see, e.g. \cite[Lemma 1.2, p. 176]{temam}) the conditions  $\bu\in L^2(0,T; V), \, \partial_t\bu\in L^2(0,T; V')$ given above imply that $\bu\in {C}([0,T]; L^2(\Gamma))$. Moreover,
it holds that
 \[
\|\bu(t')\|_{L^2(\Gamma)}^2-\|\bu(t)\|_{L^2(\Gamma)}^2=2\int_t^{t'}\langle \partial_t \bu(s ),\bu(s )\rangle_{V',V} ds, \quad \forall \ 0<t<t'<T,
\]  
and
\[
\frac{d}{dt}\|\bu(t)\|_{L^2(\Gamma)}^2=2\langle \partial_t \bu(t ),\bu(t )\rangle_{V',V}, \quad \textrm{ for a.e. } t\in (0,T).
\]

\subsection{A priori estimates} In order to obtain the well-possedness  for our system we need some a priori estimates.
 First, we prove the following lemma.

\begin{lemma}\label{ro}
Assume that $f:\rr\rightarrow\rr$ is a  globally  Lipschitz function and $\bu\in L^2(0,T;V)$, $\partial_t\bu\in  L^2(0,T;V')$  is a weak solution   in the sense of   \eqref{weak-solution}. For any  function   $\rho\in W^{2,\infty}(\rr)$ with   $\rho(0)=\rho'(0)=0$
the following holds
\begin{align*}
\frac{d}{dt}\int_\Gamma \rho(\bu(t)) dx + \int_\Gamma (\partial_x \bu(t))^2 \rho''(\bu(t))dx&=
\int_{\Gamma} f(\bu(t))\rho''(\bu(t))  \partial_x\bu(t)dx \\
&=(n-m)\int _0^{\bu(t,0)}f(s)\rho''(s)ds,
\ \text{for a.e.}\ t\in (0,T),
\end{align*}
where by $\rho(\bu)$ we understand $\rho(\bu)=(\rho(u_1), \ldots, \rho(u_{n+m}))$.

Moreover, any two solutions $\bu$ and $\bv$ of \eqref{weak-solution} verify for a.e. $ t\in (0,T)$:
\begin{align*}
\frac{d}{dt}\int_\Gamma \rho(\bu(t)-\bv(t)) dx&+  \int_\Gamma (\partial_x \bu(t)-\partial_x \bv(t))^2 \rho''(\bu(t)-\bv(t))dx \\
& =
\int_{\Gamma} (f(\bu(t))-(f(\bv(t)) )\rho''(\bu(t)-\bv(t))\partial_x(\bu(t)-\bv(t))  dx.
\end{align*}
\end{lemma}

\begin{proof}
First we observe that $\rho'(\bu)\in L^2(0,T;V)$.
   Indeed, $\rho'$ is continuous, so    $\rho'(\bu(t))\in C( \Gamma)$ for a.e. $t\in (0,T)$ and it is sufficient to show that $\rho'(\bu)\in L^2(0,T,\widetilde H^1(\Gamma))$.
 Using the estimate  $|\rho'(s)|=|\rho'(s)-\rho'(0)|\leq \|\rho''\|_{L^\infty(\rr)}|s|$ we get $\rho'(\bu)\in L^2(0,T;L^2(\Gamma))$.  Also $|\partial_x [\rho^\prime(u)]|= |\rho''(\bu)\partial_x \bu |\leq  \|\rho''\|_{L^\infty(\rr)}|\partial_x \bu|\in L^2(0,T; L^2(\Gamma))$ which proves the desired property for $\rho'(\bu)$. Since we have $\rho(0)=\rho'(0)=0$ we get that
$|\rho(s)|\leq |s|^2 \|\rho''\|_{L^\infty(\rr)} $ and then $\rho(\bu(t))\in L^1(\Gamma)$ for a.e. $t\in [0,T]$.

Using that $\rho'(\bu)\in L^2(0,T;V)$ and $\partial_t\bu\in  L^2(0,T;V')$ we obtain that $\rho (\bu)\in C([0,T],L^1(\Gamma))$ and
it holds that
\[
\int_\Gamma \rho(\bu(s))dx-\int_\Gamma \rho(\bu(s'))dx=
\int_s^{s'} \langle \partial_t \bu(t),{ \rho^\prime(\bu(t))}\rangle_{V',V} dt, \quad \forall \ 0<s<s'<T.
\]
  Thus, from Definition \ref{def1}     choosing $\bpsi=\rho^\prime(\bu )$    we obtain for a.e. $t>0$
\begin{align*}
\displaystyle\frac{d}{dt}\int_\Gamma\rho(\bu )dx&=\langle \partial_t \bu , \rho'(\bu )\rangle_{V',V}  =
(-\partial_x  \bu  +f(\bu)  , \partial_x(\rho'(\bu )))_{L^2(\Gamma)} \\
&=-\int_\Gamma (\partial_x \bu  )^2 { \rho''(\bu )} dx + \int_\Gamma  f(\bu)  \rho''(\bu ) \partial_x \bu   dx.\end{align*}
Let us denote
$$H(r)=\int_0^r f(s)\rho''(s) ds.$$
Using that $f$ if globally Lipschitz  and $\rho''$ is bounded we obtain that $H'$ is bounded on bounded sets and $|H(s)|\leq A|s|+B|s|^2$. Since $u_k (t)\in H^1(I_k)$ we use that $H'$ is bounded on the set  $[-\|u_k(t)\|_{L^\infty(I_k)},\|u_k(t)\|_{L^\infty(I_k)} ]$ to obtain that   $H(u_k)\in H^1(I_k)$ and then for a.e. $t\in (0,T)$:
\[
 \int_\Gamma  f(\bu(t))  \rho''(\bu(t) ) \partial_x \bu  (t) dx=\sum _{i=1}^nH(u_i(t,x))\Big|_{x=-\infty}^{x=0}+\sum _{j=n+1}^{n+m}H(u_j(t,x))\Big|_{x=0}^{x=\infty}= (n-m)H(\bu(t,0)).
\]
 The estimate on $H$ implies in particular that $H(\bu(t,0))\in L^1(0,T)$ so the right hand side is well defined.


  Let us now take two solutions $\bu$ and $\bv$ of \eqref{weak-solution}. Thus,    choosing $\bpsi=\rho^\prime(\bu-\bv)$  in the weak formulation of $\bu$ and $\bv$  we obtain for a.e. $t>0$
\begin{align*}
\displaystyle\frac{d}{dt}\int_\Gamma\rho(\bu-\bv)dx&=\langle \partial_t(\bu-\bv), \rho'(\bu-\bv)\rangle_{V',V} \\
& =
(-\partial_x (\bu-\bv)+f(\bu)-f(\bv), \partial_x(\rho'(\bu-\bv)))_{L^2(\Gamma)}\\
&=(-\partial_x (\bu-\bv)+f(\bu)-f(\bv),  \rho''(\bu-\bv)\partial_x(\bu-\bv))_{L^2(\Gamma)}\\
&=-\int_\Gamma (\partial_x(\bu-\bv) )^2 { \rho''(\bu-\bv)} dx + \int_\Gamma (f(\bu)-f(\bv))\rho''(\bu-\bv) \partial_x(\bu-\bv) dx.
\end{align*}
This finishes the proof.
\end{proof}

For any smooth convex function $\rho$ we have that $\rho'' \geq 0$. Using smooth convex approximations of  convex functions we will obtain various properties of the solutions.
As a consequence of Lemma \ref{ro}, we obtain the following maximum principles and $L^p$ estimates on the solutions.

\begin{remark} {\rm
	In what follows and throughout the paper we will refer to the inequality $\bu_{0}\leq \bv_{0}$
	meaning that the inequality holds for each of the components (similarly for $\geq$ and for the
	comparison of two solutions $\bu \leq \bv$ or $\bu \geq \bv$).}
\end{remark}

\begin{corollary} \label{particular.apriori} {Let  $f:\rr\rightarrow\rr$ is a  globally  Lipschitz function. 
Assume  that   $\bu$ satisfies the weak formulation \eqref{weak-solution}.}We have

i) If $\bu_{0} \geq \um$ for   some constant $\um\leq 0 $ satisfying $(n-m)f(\um)\geq 0$, then  $\bu \geq \um$.

ii) If $\bu_{0} \leq \om$ for some constant $\om\geq 0$ satisfying $(n-m)f(\om)\leq 0$ then  $\bu \leq \om$.

iii) If  $\bu_0\leq \bv_0$     the corresponding weak solutions   satisfy   the comparison principle:
 $$\bu(t )\leq
\bv(t ), \quad \forall t\in (0,T).$$

iv) Assuming that $\bu\in C([0,T],L^p(\Gamma))$ with $1\leq p<\infty$, the evolution of the $L^p(\Gamma)$ norm of the solutions to \eqref{weak-solution} satisfies
\begin{equation}
\label{norm1}
\frac{d}{dt}\int_\Gamma |\bu(t,x)|  dx\leq  (n-m) f(0)\sgn(\bu(t,0)), \qquad p=1
\end{equation}
  and
\begin{equation}
\label{normp}
\frac{d}{dt}\int_\Gamma |\bu(t,x)|^p dx\leq p(p-1)(n-m)  \int _0^{\bu(t,0)}f(s) |s|^{p-2}ds, \qquad p>1.
\end{equation}
\end{corollary}

\begin{remark} {\rm
 System \eqref{main.eq} admits constant solutions ${ v\equiv C}$ if and only if $(n-m)f(C)=0$.
Under the assumption $(n-m)f(0)=0$ the solutions remain nonnegative (or nonpositive) if the initial data
are nonnegative  (or nonpositive). Also the $L^1$-norm does not increase.}
 \end{remark}
 
 {
 \begin{remark} When $f\in C^1(\rr)$ and the solutions are bounded we entry in the framework of globally Lipschitz nonlinearities.  Indeed, since $u \in L^\infty((0,T)\times \Gamma)$ we denote by $A=\|\bu\|_{L^\infty((0,T)\times \Gamma)}$ and consider a function $\tilde f\in C^1(\rr)$ such that $\tilde f(x)=f(x)$ for $x\in (-A,A)$ and $\tilde f$ vanishes outside the interval $(-A-1,A+1)$. It follows that $\bu$ is again a weak solution with $f$ replaced by $\tilde f$. Moreover the properties of function $f$ that appear in the Corollary remain unchanged.
 \end{remark}
}

\begin{remark} {\rm
The case considered in \cite{MR2679594} corresponds to $f\in C_c^2((0,1))$
with $0\leq u_0\leq 1$. In this case maximum principles in $i)$ and $ii)$ as well as the stability of the $L^1$-norm hold. The stability in the $L^p$-norms with $1<p<\infty$ holds under the assumption $n\leq m$. }
\end{remark}

\begin{proof}[Proof of Corollary \ref{particular.apriori}]
{
We will make use of Lemma \ref{ro} for particular functions $\rho$.
}
 As in  \cite[p.185, Proof of Th. 6.3.2]{MR3468916} let us first consider the function $\eta_\eps\in C^1(\rr)$
  given by
  \[
\eta_{\eps}(s)=
\begin{cases}
	0, & -\infty<s\leq 0,\\
	\displaystyle \frac{s^2}{4\eps}, & 0<s\leq 2\eps,\\
	s-\eps, & 2\eps<s<\infty.
\end{cases}
\]
It is immediate that $0\leq \eta_\eps(s)\leq s^+$  and $\eta_\eps(s)\rightarrow s^+$       as $\eps\rightarrow 0$.

{\it i)} For the first part we consider the convex approximation $\rho_\eps(t)=\eta_\eps(\um-t) $ of the function   $(\um-t)^+$. Notice that since $\eta_\eps(s)\rightarrow s^+$ as $\eps\rightarrow 0$, it holds that
$\rho_\eps(t)=\eta_\eps(\um-t) \rightarrow (\um-t)^+$ as $\eps\rightarrow 0$ and, moreover, 
$\rho''_\eps(s)$ is concentrated in $[\um-2\eps,\um]$.
We have that $\rho_\eps(\bu(0))\equiv 0$, $\rho_\eps''={\bf 1}_{(\um-2\eps,\um)}/2\eps$
and 
\[
 H_\eps(t):= 
\int_0 ^{t}f(s)\rho''_\eps(s)ds=- 
\int^0 _{t}f(s)\rho''_\eps(s)ds.
\]
Since $\um\leq 0$   the support of $\rho''_\eps$ guarantees that the above integral vanishes when $t\geq 0$. The same holds when $\um\leq t\leq 0$. When $ t<\um$ we have for $\eps$ small enough that
\begin{align*}
 H_\eps(t)& =-\frac{1}{2\eps} \int_{\um-2\eps}^{\um}f(s)ds\rightarrow -f(\um), \quad\eps\rightarrow 0 .
\end{align*}
Hence, $H_\eps(t)\rightarrow -f(\um)\sgn(\um-t)^+$ and 
\[
\lim_{\eps \rightarrow 0}(n-m)H_\eps(\bu(t,0))=-(n-m)f(\um)\sgn (\um-\bu(t,0))^+
\leq 0.\]
Moreover, the support of $\rho''$ gives us that for small $\eps$ we have $|H_\eps(\bu(t,0))|\leq \|f\|_{L^\infty((\um-1,\um))}$. Letting $\eps\rightarrow 0$   by Fatou's Lemma we obtain that $(\um-\bu(t))^+\in L^1(\Gamma)$ and it vanishes identically. This proves the first case.

{\it ii)} For the second part we consider a convex approximation $\rho_\eps(t)=\eta_\eps(t-\overline M)$ of the function $ (t-\om)^+$.
It follows that $\rho_\eps''={\bf 1}_{(\om, \om+2\eps)}/2\eps$ and using that $\om\geq 0$ we get
\begin{align*}
H_\eps(t)&:=\int_0 ^{t}f(s)\rho''_\eps(s)ds 
 \rightarrow
\begin{cases}
f(\om),& t>\om,\\
	0, & t\leq \om.
\end{cases}
\end{align*}
Similar arguments as in the previous case give us the desired property.

{\it iii)} Let us choose $\psi_R(x)=\psi(x/R)$ where $\psi\in C_c^\infty(\rr)$, $0\leq \psi\leq 1$, satisfying $\psi\equiv 1$ in $(-1,1)$ and $\psi\equiv 0 $ in $|x|>2$. Let us consider the function $\bpsi_R=(\psi_k)_{k=1}^{m+n}$ where $\psi_{k}=\psi_R$. We will use as test function in the weak formulation of $\bu$ and $\bv$ function $\bpsi=\eta'_\eps(\bu-\bv) \bpsi_R$.
 It follows that \begin{align*}
\int_\Gamma &\eta_\eps(\bu(t)-\bv(t))\bpsi_R dx  -\int_\Gamma \eta_\eps(\bu_0-\bv_0) \bpsi_R dx\\
&=\int_0^t (-\partial_x(\bu-\bv)+f(\bu)-f(\bv), \partial_x(\eta_\eps'(\bu-\bv)\bpsi_R))ds\\
&=\int_0^t (-\partial_x(\bu-\bv)+f(\bu)-f(\bv), \eta_\eps'(\bu-\bv)\partial_x\bpsi_R)ds\\
&\quad - \int_0^t (\partial_x(\bu-\bv) , \eta_\eps''(\bu-\bv)\partial_x(\bu-\bv)\bpsi_R)ds+\int_0^t ( f(\bu)-f(\bv), \eta_\eps''(\bu-\bv)\partial_x(\bu-\bv)\bpsi_R)ds\\
&=I_\eps+II_\eps+III_\eps.
\end{align*}
Since $f$ is Lipschitz and $|\eta_\eps'|\leq 1$ the first term is bounded by
\[
|I_\eps|\lesssim \|u-v\|_{L^2(0,T;V))}\|\partial_x\bpsi_R\|_{L^2(0,T;L^2(\Gamma)))}\lesssim R^{-1/2}.
\]
The second one satisfies $II_\eps\leq 0$. For the third one since $s\eta_\eps''(s)= \frac{s}{2\eps}{\bf 1}_{(0, 2\eps)} (s)\leq 1$ we have
 $$|(f(\bu(s))-f(\bv(s)))  \eta_\eps^{\prime\prime}(\bu(s)-\bv(s))|\leq L  |(\bu(s)-\bv(s))\eta_\eps^{\prime\prime}(\bu(s)-\bv(s))|\leq L, \mbox{ for } s\in (0, t).$$
Using that $\partial_x(\bu-\bv)\bpsi_R\in L^1((0,T)\times \Gamma)$ and that $s\eta_\eps''(s)\rightarrow 0$ as $\eps\rightarrow 0$ we obtain that $III_\epsilon$ goes to zero as 
 as $\eps\rightarrow 0$. Letting $\eps\rightarrow 0$ we obtain that 
 \[
 \int_\Gamma (\bu(t)-\bv(t))^+ \bpsi_R(x)dxds\lesssim R^{-1/2}.
 \]
 Letting $R\rightarrow \infty$ we obtain that $(\bu(t)-\bv(t))^+\equiv 0$ and we finish the proof.

{\it iv)} We consider $\theta_\eps(t)=\eta_\eps(t)+\eta_\eps(-t)$ which approximates the function $|t|$ and
$\rho_\eps(s)=\theta_\eps^p(s)$. It follows that
\begin{equation}\label{deriv.norma}
 \displaystyle\frac{d}{dt}\int_\Gamma \rho_\eps(\bu(t)) dx \leq (n-m)\int _0^{\bu(t,0)}f(s)\Big[p(p-1)\theta_\eps^{p-2}(s)(\theta_\eps'(s))^2+p
 \theta_\eps^{p-1}(s)\theta_\eps''(s) \Big]ds.	
\end{equation}
When $p=1$ we have to analyze $H(t)=\int_0^t f(s)\theta_\eps(s)ds.$ The explicit 
representation of $\theta_\eps ''$ give us that
\[
H(t)=\int_0^t f(s)\frac{{\bf 1}_{(-2\eps,2\eps)}}{2\eps} ds \rightarrow f(0)\sgn(t), \quad \eps\rightarrow 0.
\] 
Thus we obtain that the right hand side in \eqref{deriv.norma} converges to $(n-m)f(0)\sgn(\bu(t,0)) $.

For $p>1$ we use that $\theta''_\eps={\bf 1}_{(-2\eps,2\eps)}/2\eps$ and $\theta_\eps(s)=s^2/4\eps$ in the support of $\theta''_\eps$. Hence the last term in the right hand side of \eqref{deriv.norma} goes to zero when $\eps\rightarrow 0$.  Concerning
the first term: since $0\leq \theta_\eps(s)\leq |s|$ and the map $s\mapsto f(s)|s|^{p-2}$ is locally integrable, using again the dominated convergence theorem, we can pass to the limit as $\eps \rightarrow 0$ to obtain   that
\[
\frac{d}{dt}\int_\Gamma |\bu(t,x)|^p dx\leq p(p-1)(n-m)   \int _0^{\bu(t,0)}f(s) |s|^{p-2}ds,
\]
which finishes the proof.
\end{proof}

\subsection{Existence and uniqueness of weak solutions}
In this subsection we show the well-possedness of our system \eqref{main.eq}, i.e., we prove Theorem \ref{well-posedness-L2}.
Let us explain the main steps in the proof of Theorem \ref{well-posedness-L2}.  First, we treat the case when $\bu_0\in L^2(\Gamma)$ and the nonlinearity $f$ is Lipschitz. Next, we consider the initial datum in $L^2(\Gamma)\cap L^\infty(\Gamma)$ and the nonlinearity $f$ in $C^1(\rr)$.
When we assume that the initial  datum is also integrable we obtain the contraction property in $L^1(\Gamma)$.

In order to obtain the existence of the solutions in each of the cases described above we remind the
following abstract theorem due to J. L. Lions (see \cite[Theorem X.9]{MR2759829}):
\begin{proposition}\label{lions}
Given $F\in L^2(0,T; V')$ and $\bu_0\in L^2(\Gamma)$, there exists a unique function
$\bu\in L^2(0,T; V)\cap {C}([0,T]; L^2(\Gamma)$, $\partial_t\bu \in L^2(0,T; V')$, such that
\[
\left\{
 \begin{aligned}
&\left\langle\partial_t\bu(t),\bpsi\right\rangle_{V',V} + (\partial_x \bu(t), \partial_x\bpsi)_{L^2(\Gamma)}
= \displaystyle\langle F(t),\bpsi\rangle_{V',V},\,\,\mbox{a.e. on } \, [0,T],\ \forall  \bpsi\in V,\\[6pt]
&\bu(0)=\bu_0.
\end{aligned}
\right.
\]
\end{proposition}

\begin{proof}[Proof of Theorem \ref{well-posedness-L2}.]
\textbf{Step I. Existence of solutions for Lipschitz  nonlinearities $f$ and $\bu_0\in L^2(\Gamma)$.}
We construct a sequence of functions inductively by considering solutions to the following problems:
\begin{equation}\label{aprox}
	\begin{cases}
\langle\displaystyle\partial_t\bu^{k+1}(t),\bpsi\rangle_{V',V} + (\partial_x \bu^{k+1}(t), \partial_x\bpsi)_{L^2(\Gamma)}
=  ( f(\bu^k(t)),\partial_x\bpsi)_{L^2(\Gamma)},\mbox{a.e.} [0,T],\,\forall\, \bpsi\in V,\\[6pt]
\bu^0\equiv 0,\quad \bu^{k+1}(0)=\bu_0.
		\end{cases}
\end{equation}
Let us first observe that
 since the  $f$ is Lipschitz continuous
	and that $\bu^k\in L^2(0,T; V)$ implies that $f(\bu^k)-f(0)\in L^2(0,T; V)$ we get that the map
	$F_k : (0,T) \to V'$ defined by
	\[
	\langle F_k(t),\bpsi \rangle_{V',V}=( f(\bu^k(t)),\partial_x\bpsi)_{L^2(\Gamma)}=( f(\bu^k(t))-f(0),\partial_x\bpsi)_{L^2(\Gamma)}+
	(n-m)f(0)\bpsi(\b0)
	\]
belongs to $L^2(0,T,V')$.
Proposition \ref{lions} gives us a sequence of functions $\{\bu^n\}_{n\in \mathbb{N}} \in W(0,T),$
where, for any $T>0$,
\begin{align}\label{W}
W(0,T)=\{\varphi \in  L^2(0,T; V); \partial_t\varphi\in L^2(0,T; V')\}\hookrightarrow   {C}([0,T]; L^2(\Gamma)).
\end{align}
 The next steps are devoted to prove that $\{\bu^n\}_{n\in \mathbb{N}}$ is a Cauchy sequence in $W(0,T)$ for a time $T$ depending on the Lipschitz constant of the nonlinearity $f$.  Therefore, we define
$$ \beta^{n}:=\bu^{n+1} - \bu^n,  n\geq 0,   $$
which belongs to $W(0,T)$ and, according to \eqref{aprox}, satisfies
 \begin{equation}\label{aprox1}
	\begin{cases}
\langle\displaystyle\partial_t\beta^n(t),\bpsi\rangle_{V',V} + (\partial_x \beta^n(t), \partial_x\bpsi)_{L^2(\Gamma)}
= \displaystyle\left( f(\bu^{n}(t))- f(\bu^{n-1}(t)),\partial_x\bpsi\right)_{L^2(\Gamma)},
\,\forall\, \bpsi\in V,\\[6pt]
\beta^n(0)=0.
		\end{cases}
\end{equation}
Choosing $\bpsi=\beta^n$ in \eqref{aprox1}, we find that
$$\displaystyle\frac{1}{2}\frac{d}{dt}\int_\Gamma |\beta^n|^2 dx +\int_\Gamma |\partial_x\beta^n |^2 dx
=\int_\Gamma ({ f(\bu^{n})}-f(\bu^{n-1})) \partial_x\beta^n dx.$$
Thus, since $f$ is Lipschitz continuous there exists $L>0$ such that
\begin{align*}
\displaystyle\frac{1}{2}\int_\Gamma |\beta^n(t)|^2 dx +\int_0^t\int_\Gamma |\partial_x\beta^n |^2 dx ds
&\leq\frac 12\int_0^t\int_\Gamma L^2|{ \bu^{n}}-\bu^{n-1}|^2 dxds+\frac 12 \int_0^t \int _\Gamma |\partial_x\beta^n |^2dxds
 \\
& \leq \frac{L^2}2\int_0^t\int_\Gamma |\beta^{n-1}|^2 dx ds + \frac 12\int_0^t\int_\Gamma |\partial_x\beta^n |^2 dx ds.\nonumber
\end{align*}
This gives that for any $t>0$  it holds that,
\begin{equation}
  \label{energy est}
\int_\Gamma |\beta^n(t)|^2 dx +\int_0^t\int_\Gamma |\partial_x\beta^n (s)|^2 dx ds\leq
{L^2}\int_0^t\int_\Gamma |\beta^{n-1}(s)|^2 dx ds.
\end{equation}
In particular, for any $T>0$ we get
$$\|\beta^n\|^2_{{C}([0,T]; L^2(\Gamma))}\leq TL^2\|\beta^{n-1}\|^2_{{C}([0,T]; L^2(\Gamma))}.$$
Choosing $TL^2 < \frac{1}{4}$,  we obtain
\begin{equation}\label{estt}
\|\beta^n \|_{{C}([0,T]; L^2(\Gamma))}\leq \frac{1}{2} \|\beta^{n-1}\|_{{C}([0,T]; L^2(\Gamma))}
\leq \frac{1}{2^{n-1}} \|\beta^1\|_{{C}([0,T]; L^2(\Gamma))},
\end{equation}
  which allows us to conclude that
$\{\bu^n\}_{n\geq 0}$   is a Cauchy sequence in $ {C}([0,T]; L^2(\Gamma)).$
Going back to \eqref{energy est} we get
\begin{align}\label{est}
\displaystyle\int_0^T\int_\Gamma |\partial_x\beta^n|^2 dx dt &\leq L^2 \displaystyle\int_0^T\int_\Gamma |\beta^{n-1}|^2 dx dt
 \leq L^2 T \|\beta^{n-1}\|^2_{{C}([0,T]; L^2(\Gamma))}\\
& \leq \frac{L^2 T}{2^{2(n-1)}}\|\beta^1\|^2_{{C}([0,T]; L^2(\Gamma))}.\nonumber
\end{align}
It follows that
$\{\partial_x \bu^n\}_{n\geq 0}$  is a Cauchy sequence in $L^2 (0,T; L^2(\Gamma)).$
Thus,
\begin{align}\label{cauchy u}
\{\bu^n\}_{n\geq 0} \mbox{ is a Cauchy sequence in } {C}([0,T]; L^2(\Gamma))\cap L^2 (0,T; V).
\end{align}
Moreover, from \eqref{aprox1}  since $f$ is Lipschitz  we find
\begin{align*}
\Big|\left\langle\displaystyle\partial_t\beta^n(t),\bpsi\right\rangle_{V',V}
\Big|\leq \|\partial_x\beta^n(t)\|_{L^2(\Gamma)}\|\partial_x\bpsi \|_{L^2(\Gamma)}
+ L\|\beta^{n-1}(t)\|_{L^2(\Gamma)}\|\partial_x\bpsi \|_{L^2(\Gamma)},
\end{align*}
i.e.,
\begin{align*}
\Big\|\partial_t\beta^n(t)\Big\|_{V'}\leq \|\partial_x\beta^n(t)\|_{L^2(\Gamma)} + L \|\beta^{n-1}(t)\|_{L^2(\Gamma)}.
\end{align*}
Then, from \eqref{est} we have
\begin{align*}
\Big\|\partial_t\beta^n\Big\|_{L^2(0,T; V')}^2&\leq 2\|\partial_x\beta^n\|_{L^2(0,T; L^2(\Gamma))}^2 + 2L^2\|\beta^{n-1}\|_{L^2(0,T; L^2(\Gamma))}^2 \\
& \leq { 4L^2 \|\beta^{n-1}\|_{L^2(0, T; L^2(\Gamma))}^2}\leq
 4L^2T \|\beta^{n-1}\|_{C([0,T]; L^2(\Gamma))}^2.
\end{align*}
In view of the estimates for $\beta^n$ in \eqref{estt} we conclude that
\begin{align}\label{cauchy ut}
\{ \partial_t\bu^n \}_{n\geq 0} \mbox{ is a Cauchy sequence in } L^2(0,T; V').
\end{align}
Since $f$ is a Lipschitz continuous function, \eqref{cauchy u}-\eqref{cauchy ut} guarantee that we can
pass to the limit in \eqref{aprox} to obtain a weak solution in the time interval $(0,T)$
in the sense of \eqref{weak-solution}. Repeating the same arguments on any interval $[nT,(n+1)T]$ we
 obtain the
existence of a global solution.

\textbf{Step II. $C^1(\rr)$-nonlinearities and $\bu_{0}\in L^2(\Gamma)\cap L^\infty(\Gamma)$.}
Following classical arguments in conservation law's theory (see for example \cite[p.~60]{MR1304494}) we truncate the nonlinearity.
Let us choose $\um \leq 0\leq  \om$ such that  $\bu_0\in [\um,\om]$ and
\[
(n-m)f(\um)\geq 0\geq (n-m)f(\om).
\]
 We introduce a  smooth function which satisfies
\begin{equation*}
\theta_{\um,\om}(r)=\begin{cases}
1,& \mbox{if }  \um\leq r\leq \om,\\
0,& \mbox{if } r \notin (\um-1,\om + 1),
		\end{cases}
\end{equation*}
and set $f_{\um,\om}(r)=\theta_{\um,\om}(r)f(r)$. This new function is globally Lipschitz and
by the previous case, there exists a global solution $\bu\in W(0,T)$ for any $T>0$  satisfying for all $\psi\in V$
\begin{equation*}\label{aprox M}
	\begin{cases}
\left\langle\displaystyle\partial_t\bu(t),\bpsi\right\rangle_{V',V} + (\partial_x \bu(t), \partial_x\bpsi)_{L^2(\Gamma)}
=  ( f_{\um, \om}(\bu(t)) ,\partial_x\bpsi),
\mbox{ a.e. on } [0,T],\\[6pt]
\bu(0)=\bu_0.
		\end{cases}
\end{equation*}
Let us prove that  $\bu(t)\in [\um, \om]$ for all $t>0$.
The function $f_{\um,\om}$ introduced above
also satisfies the   hypothesis
\[
(n-m)f_{\um,\om}(\um)\geq 0\geq (n-m)f_{\um,\om}(\om).
\]
Using Corollary \ref{particular.apriori}  with $f_{\um,\om}$ we obtain
$\um \leq \bu(t) \leq \om $ for all $ t>0 $.  Since $f(\bu)=f_{\um,\om}(\bu)$,  for $\bu$ lying between $\um$ and $\om $  we obtain that $\bu$ satisfies for any $T>0$ and for all  $\bpsi\in V$
\begin{equation}
	\begin{cases}
\left\langle \partial_t\bu(t),\bpsi\right\rangle_{V',V} + (\partial_x \bu(t), \partial_x\bpsi)_{L^2(\Gamma)}
= ( f(\bu(t)) ,\partial_x\bpsi) \mbox{ a.e. on} [0,T],\\[6pt]
\bu(0)=\bu_0.
\end{cases}\nonumber
\end{equation}
This proves  the existence of  a global solution in this case.

{\bf Step III. Uniqueness.}  We consider $\bu_1$ and $\bu_2$ in $L^\infty((0,\infty)\times \Gamma)$ two solutions of problem \eqref{weak-solution} and define $\beta=\bu_1-\bu_2$. It follows that $\beta(0)=0$ and the same arguments used in Step I show that $\beta$ satisfies
\[
\frac{1}2 \frac{d}{dt}\int _\Gamma \beta^2 dx+\int_{\Gamma} |\partial_x\beta|^2 dx=
\int _{\Gamma} (f(\bu_1)-f(\bu_2))\partial_x\beta dx.
\]
Let us denote $ \widetilde M=\|\bu_1\|_{L^\infty((0,\infty)\times \Gamma)}+\|\bu_2\|_{L^\infty((0,\infty)\times \Gamma)}$. Function  $f|_{[- \widetilde M, \widetilde M]}$ is Lipschitz, so it satisfies
$
|f(\bu_1)-f(\bu_2)|\leq L|\bu_1-\bu_2|
$
for some positive constant $L=L(- \widetilde M, \widetilde M)$.
Then,  we have
\begin{align*}
\label{}
  \frac 12 \frac{d}{dt}\int _\Gamma \beta^2 dx+\int_{\Gamma} |\partial_x\beta|^2 dx\leq
 L \int _{\Gamma}  |\bu_1-\bu_2||\beta_x| dx\leq \frac{L^2}4\int_\Gamma \beta^2 dx+\int _\Gamma |\partial_x\beta|^2 dx.
\end{align*}
By Gronwall inequality, since $\beta(0)\equiv 0$ we obtain that $\beta(t)\equiv 0$ for all $t>0$, so $\bu_1\equiv \bu_2$ and the uniqueness is proved.
\end{proof}

Let us now prove Theorem \ref{well-posedness}.

\begin{proof}[Proof of Theorem \ref{well-posedness}]
	Let us consider $\bu_0\in L^1(\Gamma)\cap L^\infty(\Gamma)$.  From Theorem \ref{well-posedness-L2}   we know  that,   there exists a unique solution $\bu\in C([0,\infty); L^2(\Gamma))\cap L^2((0,\infty); V)\cap  L^\infty((0,\infty)\times \Gamma)$ of problem \eqref{weak-solution}.  Let us show that when the initial data is in $L^1(\Gamma)$ the solution remains in $L^1(\Gamma)$ for all time $t>0$. Take a convex approximation $\rho_\eps$   in Lemma \ref{ro}  of the function $|\cdot|$ (see the proof of Corollary \ref{particular.apriori}) and consider $\eta\in H^1(\rr)$ with   $0\leq \eta\leq 1$. Set $\boldeta=(\eta_k)_{k=1}^{m+n}$ where $\eta_k=\eta$.
	Let us choose $T>0$ and $\rho_\eps(\bu)\boldeta$ as a test function in the weak formulation \eqref{weak-solution}:
\begin{align*}
\int_{\Gamma} &\rho_\eps(\bu(T))\boldeta(x) dx-\int_{\Gamma} \rho_\eps(\bu_0)\boldeta(x) dx= \int_0^T \Big( -\big(\partial_x\bu, (\rho'_\eps(\bu )\boldeta)_x\big)_{L^2(\Gamma)} +  ( f(\bu)  ,(\rho'_\eps(\bu )\boldeta)_x \big)_{L^2(\Gamma)} \Big)dt\\
\\
&= \int_0^T \Big( -\big(\partial_x\bu, (\rho'_\eps(\bu )\boldeta)_x\big)_{L^2(\Gamma)} + \big( f(\bu)-f(0) ,(\rho'_\eps(\bu )\boldeta)_x \big)_{L^2(\Gamma)}+(n-m)f(0)\eta(0)  \rho_\eps'(\bu(t,0)) \Big)dt\\
&= \int_0^T \Big( -(\partial_x\bu, \rho_\eps''(\bu)(\partial_x\bu) \boldeta ) -\big(\partial_x\bu, \rho'_\eps(\bu )\boldeta_x\big)_{L^2(\Gamma)}+\big((f(\bu)-f(0))\rho_\eps''(\bu)\bu_x,\boldeta\big)_{L^2(\Gamma)} \\
&\qquad \qquad+ 	\big(f(\bu)-f(0),\rho_\eps'(\bu)\boldeta_x\big)_{L^2(\Gamma)}
+(n-m)f(0)\eta(0)  \rho_\eps'(\bu(t,0))\Big)dt\\
&= I_\eps+ II_\eps +III_\eps+IV_\eps +V_\eps.
\end{align*}
The first term satisfies $I_\eps\leq 0$.
	For the second and  the fourth  term we easily obtain
	\[
	| II_\eps |+ | IV_\eps  |\leq 2T^{1/2} \|\bu\|_{L^2(0,T,V)} \|\boldeta_x\|_{L^2(\Gamma)}.
	\]
	Since $\rho_\eps(s)\rightarrow \sgn(s)$ as $\eps\rightarrow 0$
we obtain that
	\[
	\lim_{\eps\rightarrow 0} V_\eps = (n-m) f(0)\eta(0)\int_0^T\sgn(\bu(t,0))dt.
	\]
	Defining $H_\eps(t)=\int_0^t (f(s)-f(0))\rho_\eps''(s)ds$, the second term satisfies	\begin{align*}
	 III_\eps=(n-m) \int_0^T H_\eps(\bu(t,0))\eta(0)dt -  \int_0^T (H_\eps(\bu(t)),\boldeta_x)dt.
	\end{align*}
	Since $|f(s)-f(0)|\leq C|s|$ on bounded intervals we have that $H_\eps(t)\rightarrow 0$ for any $t\in (0,T).$
	Moreover $|H_\eps(\bu(t,0))|\leq |\bu(t,0)|\leq \|\bu(t)\|_{V}$ so the first term in the right hand side of $III_\eps$ goes to zero. The second term can be bounded similarly as $IV_\eps$ since
	\[
	| (H_\eps(\bu(t)),\boldeta_x) |\leq \|H_\eps(\bu(t)) \|_{L^2(\Gamma)}
 \|\boldeta_x\|_{L^2(\Gamma)}\leq \| \bu(t) \|_{L^2(\Gamma)}
 \|\boldeta_x\|_{L^2(\Gamma)}.
	\]
	
	Putting together the above estimates and letting $\eps\rightarrow 0$ we obtain
	\begin{align*}
\int_{\Gamma} &|\bu(T)|\boldeta(x) dx\leq \int_{\Gamma}|\bu_0|\boldeta(x) dx+
3T^{1/2} \|\bu\|_{L^2(0,T,V)} \|\boldeta_x\|_{L^2(\Gamma)}+|(n-m)Tf(0)\eta(0)|.
 \end{align*}
Let us now choose $\boldeta =\boldeta_R  =(\psi(\cdot/R))_{k=1}^{m+n}$ with $\psi\in C^\infty_c(\rr)$, $\psi\geq 0$, satisfying $\psi \equiv 1$ in $(-1,1)$ and $\psi\equiv 0$ in $|x|>2$. It follows that $\|\boldeta_x\|_{L^2(\Gamma)}\simeq R^{-1/2}$ and letting $R$ go to infinity  we get $\bu(T)\in L^1(\Gamma)$ and 
\begin{align*}
\int_{\Gamma} |\bu(T)|dx&\leq \int_{\Gamma} |\bu_0|  dx+|(n-m)Tf(0)|.
 \end{align*}

The same arguments holds by taking $\bu\boldeta$, i.e. $\rho(s)=s$, as test function in the weak formulation \eqref{weak-solution}. Choosing  the truncation function 
$\boldeta_R=(\psi(\cdot/R))_{k=1}^{m+n}$ as above we get
\[
\int_{\Gamma} \bu(T)\boldeta_Rdx=\int_{\Gamma}\bu_0\boldeta_Rdx + O(R^{-1/2})+(n-m)Tf(0).
\]	
Letting $R$ to go to infinity will give us the mass conservation property under the assumption $(n-m)f(0)=0$.
	
	Let us now choose $\eta(x)= \psi^N(x/R)$ with $\psi^N\in C^1(\rr)$, $0\leq \psi\leq 1$, satisfying  $\psi^N\equiv 1$ in $2<|x|<N$ and supported in the set $\{1<|x|<N+1\}$,
	such that $\|\psi^N_x\|_{L^2(\rr)}$ is independent of parameter $N$. We obtain
\begin{align*}
\int_{\Gamma,2R< |x|<NR} &|\bu(T)| dx\\
&\lesssim \int_{\Gamma, R<|x|<(N+1)R} |\bu_0|  dx+
  T^{1/2} R^{-1/2} \|\bu\|_{L^2(0,T,V)}  \|\psi^N_x\|_{L^2(\rr)}+|(n-m)Tf(0)|\\
&  \lesssim \int_{\Gamma, R<|x| } |\bu_0|  dx + T^{1/2}R^{-1/2}  +|(n-m)Tf(0)|.
 \end{align*}	
 Letting $N$ going to infinity we obtain a tail control of the solution in terms of the tail control of the initial data:
 \[
 \int_{\Gamma,2R< |x|} |\bu(T)| dx \lesssim \int_{\Gamma, R<|x| } |\bu_0|  dx + T^{1/2}R^{-1/2}  +|(n-m)Tf(0)|.
 \]
The tail control together with the $L^1(\Gamma)$ contraction property imply that the solution $\bu$ not only belongs to $ C([0,\infty),L^2(\Gamma))$ but also  to the space $C([0,\infty),L^1(\Gamma))$.

		Finally, the $L^1(\Gamma)$-contraction property follows as in case {\it iii)} of Corollary \ref{particular.apriori}. Similar computations to establish the contraction property
	 	 has been done in \cite[Th.~1.2]{MR2679594} without concerning regularity assumptions on the solutions or restrictions on nonlinearity.
\end{proof}

\begin{proof}[Proof of Theorem \ref{decay.estimates}]
The mass conservation follows from the fact that $f(0)=0$.
	For the second estimate, choosing $\rho(s)=|s|^{p}$, $p\geq 2$ in Lemma \ref{ro} and using that $(n-m)a\geq 0$,
	we obtain that
\[
\frac {d}{dt} \int_{\Gamma} |\bu|^pdx +\frac{4(p-1)}p\int_{\Gamma} |\partial_x |\bu|^{p/2}|^2dx\leq 0.
\]
For $p=2$ this gives us the energy estimate.
Using Nash-like inequalities for half-lines (see for example \cite{haeseler} in the case of graphs having some infinite edges or simple use even extensions and use the classical ones for the real line)
we can follow the arguments in \cite[Section 4.2]{MR1124296} to obtain the desired decay in any $L^p$-norm. We emphasize that tracking the constants as in  \cite[Proof of Th.4.2]{MR1124296} the decay is obtained also in the $L^\infty(\Gamma)$ norm.
\end{proof}

\section{Self similar profiles}\label{self-similar}
Let us now discuss the self-similar profile $\bu_M(t)=(u_{M,k}(t))_{k=1}^{m+n}$ that appears in the characterization of the long time behavior of our solutions in Theorem \ref{first.term}.
In the case $q>2$ the self-similar profile  is a  Gaussian  given by
	 \begin{equation}\label{profile_asy}
	u_{M,k}(t,x)=\frac{2M}{m+n}\frac {1}{\sqrt {4\pi t}}e^{-\frac{x^2}{4t}}, \qquad x\in I_k.
	\end{equation}
We have that this profile $\bu_M$ satisfies the heat equation on each half line together with some coupling conditions:
\begin{equation}
	\label{heat.dirac}
	\begin{cases}
		\partial_t u_i(t, x)-\partial_{xx}u_i(t, x)=0,& t>0,x<0, i\in \{1,\dots,n\},\\
		\partial_t u_j(t, x)-\partial_{xx}u_j(t, x)=0,& t>0,x>0, j\in \{n+1,\dots,n+m\},\\
		u_i(t,0)=u_j(t,0), & i,j\in\{1,\dots,n+m\},\\
		\displaystyle\sum_{i=1}^n  \partial_x u_i(t,0) \displaystyle=\sum _{j=n+1}^{n+m} \partial_x u_j(t,0).
	\end{cases}
\end{equation}
Moreover, it is   straightforward  that $\bu_M $ satisfies
\begin{align}\label{limit.eq.def.2}
\int _0^\infty \int_{\Gamma}\bu_M (t,x) (\partial_t \bp+\partial_{xx}\bp)dxdt+M \bp(0,0) =0,
	\end{align}
for any $\bp=(\varphi_k)_{k=1}^{n+m}\in C_c([0,\infty), D(\Delta_\Gamma))\cap C^1_c([0,\infty),L^2(\Gamma))$.

In \cite[Proof of Th.~3.1, Step II]{rossi2020} it has been proved the following existence and uniqueness result.

\begin{theorem} \label{teo_heat}For any  $M$ there exists a unique function $\bu_M\in L ^1_{loc}((0,\infty); L^1(\Gamma))$ satisfying \eqref{limit.eq.def.2}  and it is given by  the self-similar profile   \eqref{profile_asy}.
	\end{theorem}

Let us now consider the system \eqref{main.eq} with the nonlinearity $f(u)=-|u|^{q-1}u$, $q>1$, that is,
 \begin{equation}
	\label{heat.dirac.2}
	\left\{\begin{array}{l}
		\partial_t u_i(t,x)-\partial_{xx}u_i(t,x)-\partial_x(|u|^{q-1}u)=0, \qquad t>0,\ x<0, \ i\in \{1,\dots,n\},\\[5pt]
		\partial_t u_j(t, x)-\partial_{xx}u_j(t,x)-\partial_x(|u|^{q-1}u)=0, \qquad t>0,\ x>0, \ i\in \{n+1,\dots,n+m\},\\ [5pt]
		u_i(t,0)=u_j(t,0), \qquad t>0,\ i, j\in\{1,\dots,n+m\},\\ [5pt]
		\displaystyle \sum_{i=1}^n  \left(\partial_x u_i(t,0)+ |u_i|^{q-1}u_i(t,0)\right)=\sum _{j=n+1}^{n+m} \left(\partial_x u_j(t,0)+|u_j|^{q-1}u_j(t,0)\right), \ t>0,
	\end{array}\right.
\end{equation}
with the initial datum given by
\begin{equation}\label{initial.delta.2}
	\lim _{t\rightarrow 0}\sum _{i=1}^{n}\int _{-\infty}^0 u_i(t,x)\varphi_i(x)dx+\sum _{j=n+1}^{n+m}\int_0^{\infty} u_j(x)\varphi_j(t, x)dx=M\bp (0)
\end{equation}
for any $\bp=(\varphi_l)_{l=1}^{n+m}\in BC(\Gamma)$, with $\bp(0):=\varphi_i( 0)=\varphi_j( 0)$  for any $\ i,j\in \{1,\dots,n+m\}.$

The well-posedness of the above system is related to the well-posedness to the following  problem in the real line
\begin{equation}
	\label{conv.dirac}
	\begin{cases}
		\partial_t u-\partial_{xx}u-\partial_x(|u|^{q-1}u)=0,& t>0,x\neq 0,  \\
		u(t,0-)=u(t,0+), & t>0,\\
		n ( \partial_x u +|u|^{q-1}u)(t,0-)=m ( \partial_x u +|u|^{q-1}u)(t,0+),& t>0,
	\end{cases}
\end{equation}
with the initial datum $u_0$ satisfying $(n1_{\rr_-}+m1_{\rr_+})u_0=M\delta_0$  in the sense of bounded measures:
\begin{equation}
	\label{initial.delta.4}
	\lim _{t\rightarrow 0}n\int _{-\infty}^0 {  u(t,x)}\varphi (x)dx+m{ \int_0^{\infty}} u(t,x)\varphi(x)dx=M\varphi(0)
\end{equation}
for any $\varphi\in BC(\rr)$.

When $n=m$ this corresponds to the classical convection-diffusion equation posed in the real line.
In the case of the real line and $q>1$ the case of nonnegative/nonpositive solutions was considered in \cite{MR1233647} where the existence and uniqueness of the solutions of the problem
\[
u_t=u_{xx}+(|u|^{q-1}u)_x
\]
with a Dirac delta as initial datum was proved. Later, in \cite{MR1440033}   the well-possedness
in the class of solutions that may change sign has been proved for $q\in (1,2]$ by assuming  a uniform tail control as time goes to zero,
that is, for any fixed $r>0$
\[
 \int_{|x|>r} |u(t,x)|dx \rightarrow 0, \ \text{as}  \ t\rightarrow 0.
\]

Similar questions for the system \eqref{conv.dirac}-\eqref{initial.delta.4} in the case $n\neq m$ still remain to be analyzed.

In the following we will give a positive answer of the above question in the particular case when
$f(u)=-|u| u$.
We analyze the solutions to the  system \eqref{heat.dirac.2}
where the initial datum is taken in the sense of measures \eqref{initial.delta.2}.
Nonlinearity $f(s)=-|s|s$ fits in the hypothesis of Theorem \ref{decay.estimates} under the assumption $n\geq m$.
 Our main goal is to show that there is a unique solution to  \eqref{heat.dirac.2}-\eqref{initial.delta.2}   and obtain a self-similar expression for the components.
Let us first comment on the main ideas used in the proof. We consider the case of nonnegative solutions, the case of nonpositive solutions being similar. We will prove that, under the sign condition on the solutions, all the components defined on
the same interval $\rr_{-}$ or $\rr_+$ are equal: i.e. $u_i=u_{i'}$ for all $1\leq i, i'\leq n$ and $u_{n+j}=u_{n+j'}$ for all $1\leq j,j'\leq m$. Hence, the problem of existence and uniqueness of solutions of system \eqref{heat.dirac.2}-\eqref{initial.delta.2} is reduced to study the same problem for a
related equation in the real line:
given two positive numbers $n$ and $m$ consider the following coupled Burgers' equations
\begin{equation}
	\label{coupled-burgers}
	\left\{
	\begin{array}{ll}
		\partial_t u -\partial_{xx}u-\partial_x(|u|u)=0,& t>0,x\neq 0, \\[6pt]
		u(t,0-)=u(t,0+), &  t>0,\\[6pt]
		 n ( \partial_x u +|u|u)(t,0-)=m  (\partial_x u+|u|u)(t,0+), &  t>0,
	\end{array}
	\right.
\end{equation}
with the initial datum taken in the sense of measures
\begin{equation}
{ 	\label{initial.delta.3}
	\lim _{t\rightarrow 0}n\int _{-\infty}^0 u(t,x)\varphi (x)dx+m\int_0^{\infty} u(t,x)\varphi(x)dx=M\varphi(0), }
\end{equation}
for any   $\varphi\in BC(\rr)$.

\begin{theorem}
	\label{burgers.nm}Let $n\geq m$.
	For any positive/negative number $M$ there exists a unique nonnegative/nonpositive solution $u_M\in C((0,\infty),L^1(\rr))$ to the
	system \eqref{coupled-burgers} with the initial condition \eqref{initial.delta.3}.
	This unique solution is given by $$u_M(t,x)=\frac 1{\sqrt t}G_M(\frac x{\sqrt {t}})$$ where
	\begin{equation}
\label{perfil.fm}
  G_M(y)= \frac{\alpha_{n,m,M} e^{-\frac{y^2}4}}{1+\alpha_{n,m,M}\int_{-\infty}^y e^{-s^2/4}ds}
\end{equation}
	and $\alpha_{n,m,M} $ is the unique solution in the interval $ (-\frac 1{2\sqrt \pi}, \infty)$  of the   equation
	\begin{equation}
	\label{eq.alpha}
	 |1+\alpha \sqrt{ \pi} |^{n-m}|1+2\alpha \sqrt\pi|^m=e^M.
	\end{equation}
\end{theorem}

\begin{proof}[Proof of Theorem \ref{burgers.nm}]
{\bf Step I. Existence.}  	Looking for self similar solutions $$u(t,x)={  \frac{1}{\sqrt{t}}}G_M(\frac x{\sqrt {t}})$$
	we  obtain that the profile $G_M$ should satisfy
	\[
	G_M(y)=
	\left\{
	\begin{array}{ll}
\displaystyle \frac{\alpha e^{-\frac{y^2}4}}{1+\alpha\int_{-\infty}^y e^{-s^2/4}ds}, & y<0,\\
	\displaystyle	\frac{-\beta e^{-\frac{y^2}4}}{1+\beta\int_y^\infty e^{-s^2/4}ds}, & y>0.
	\end{array}
	\right.
	\]
	The coupling conditions at $x=0$ impose that $\frac 1\alpha +\frac 1\beta =-2\sqrt \pi$.  Notice that the two branches of $G_M$ are of the same  form \eqref{perfil.fm}.  The mass conservation gives us that $\alpha$ is a solution to
	\[  |1+\alpha \sqrt \pi|^{n-m}|1+2\alpha \sqrt\pi|^m=e^M.
	\]
	In order to guarantee that $u(t)\in L^1(\rr)$, so $G_M\in L^1(\rr)$ we have to look for $\alpha\in (-\frac 1{2\sqrt \pi}, \infty)$. Elementary computations shows that the left hand side of the equation is an increasing function of $\alpha$ in the interval $ (-\frac 1{2\sqrt \pi}, \infty)$ and then there exists an unique $\alpha_{n,m,M}$ solution of the above equation and hence an unique  profile $G_M\in L^1(\rr)$.

	\medskip
	
	{\bf Step II. Uniqueness.}
	The proof of this fact is quite technical and we divide it in some steps. We consider the case of
	nonnegative solutions of equation \eqref{coupled-burgers} with initial data \eqref{initial.delta.3}.
	The case of nonpositive solutions can be treated similarly
	by replacing $u$ by $-u$.

{\textit{Step a). Hopf-Cole transform}.}
		  Fix an initial data $u_0\in L^1(\rr) $ with mass $M$.
		Since $u_{0 }\in L^1(\rr) $  the results  in Theorem \ref{decay.estimates} and Remark \ref{l1solutions} give us that $u \in C([0,\infty),L^1(\rr))\cap L^2_{loc}((0,\infty),H^1(\rr))\cap L^\infty_{loc}((0,\infty)\times\rr)$. Its total mass is conserved along time
\[
M :=\int _{\rr} u (t,x)(n1_{\rr_-}+m1_{\rr_+})dx=\int _{\rr}u_{0 }(x)(n1_{\rr_-}+m1_{\rr_+})dx.
\]
   Also, $ u$ satisfies the decay obtained in Theorem \ref{decay.estimates}:
\begin{equation}
\label{decay.u.123}
\|u (t)\|_{L^\infty(\rr)}\leq     C  t^{-1/2}M.
\end{equation}
		Since $u \in   L^2_{loc}((0,\infty),H^1(\rr))$ we have that for a.e. $t>0$ the map $t\mapsto u (t,0)$ is well defined and belongs to $L^2_{loc}((0,\infty))$. By \eqref{decay.u.123} we also have that for a.e. $t>0$,
\begin{equation}
\label{decay.u.0}
	0\leq u (t,0) \leq  C  t^{-1/2}M.
\end{equation}

		For $u$ solution of   system \eqref{coupled-burgers} with initial data $u_0 $
we introduce the function
	\[
	v (t,x) =\int_{-\infty}^x u (t,y)(n1_{\rr_{-}}+m1_{\rr_+})dy.
	\]
	Since $u(t)\in L^1(\rr)$ function $v$ is continuous, in particular $v(t,0-)=v(t,0+)$.
	It also follows that $0\leq v\leq M$ and
	$$v_x (t,x) =u (t,x)(n 1_{\rr_{-}} (x)+m1_{\rr_+} (x)).$$
 We also introduce the function $w $  given by
	\[
	w(t,x)=\left\{
\begin{array}{ll}
	\displaystyle e^{v (t,x)/n},& x<0,\\
	e^{v (t,x)/m},&x>0.
\end{array}
	\right.
	\]	
Under the assumption $n\geq m$ we have that for fixed $t>0$ the map $x\rightarrow w(t,x)$ is increasing and satisfies
 $1\leq w \leq e^{M/m}$. Using that
 $w_x=wu$ we obtain that  $w$ satisfies the following system of coupled heat equations
	\begin{equation*}
	\left\{
	\begin{array}{ll}
		\partial_t w   -\partial_{xx}w =0,& t>0,x\neq 0, \\[6pt]
		w^n(t,0-)=w^m(t,0+), &  t>0,\\[6pt]
		\displaystyle  \frac{w_{x}}{w}(t,0-)= \frac{w_{x}}{w}(t,0+)=u(t,0), &t>0,\\ [6pt]
		 w(0,x)=w_0(x).
 	\end{array}
	\right.
\end{equation*}
Let us denote $w_R$ and $w_L$ the restrictions  $w$ to $\rr_+$ and $\rr_-$  respectively.
Also, $w_{0R}$ and $w_{0L}$ are the restrictions of the initial datum to $(0, \infty)$ and $(-\infty, 0)$ respectively.
Solving separately the two heat equations with Neumann boundary condition on half lines we have the following representation
formulas
\begin{equation}\label{w_Rx}
w_R(t,x)=\int_0^\infty (K_t(x-y)+K_t(x+y))w_{0R}(y)dy-2\int_0^t K_{t-\tau}(x)w_{R,x}(\tau,0)d\tau,
\end{equation}
and
\begin{equation}\label{w_Lx}
w_L(t,x)=\int_{-\infty}^0 (K_t(x-y)+K_t(x+y))w_{0L}(y)dy+2\int_0^t K_{t-\tau}(x)w_{L,x}(\tau,0)d\tau.
\end{equation}
Writing both expressions at $x=0$ we get
\begin{align}\label{wr}
w_R(t,0)=2\int_0^\infty K_t(y)w_{0R}dy-\frac 1{\sqrt \pi}\int_0^t \frac{w_{R}(\tau,0)u(\tau,0)}{\sqrt{t-\tau}}d\tau
\end{align}
and
\begin{align}\label{wl}
w_L(t,0)=2\int_{-\infty}^0 K_t(y)w_{0L}dy+\frac 1{\sqrt \pi}\int_0^t \frac{w_{L}(\tau,0)u(\tau,0)}{\sqrt{t-\tau}}{ d\tau.}
\end{align}
We now introduce the functions $\eta_R(t)=\sqrt t u(t,0)w_R(t,0)$ and $\eta_L(t)=\sqrt t u(t,0)w_L(t,0)$.
In view of \eqref{decay.u.0} we have
\[
0\leq \eta_L(t)  \leq  C   Me^{M/m}, \qquad 0\leq \eta_R(t)  \leq  C   Me^{M/m}.
\]
Both of them are bounded functions in $[0,\infty)$ and they satisfy the system
\[
\eta_R(t)=\sqrt t u(t,0)(M_{0R} (t)-(\mathcal{L}\eta_R)(t) ),
\]
\[
\eta_L(t)=\sqrt t u(t,0)(M_{0L}(t) +(\mathcal{L}\eta_L)(t) ),
\]
where \[
M_{0R}(t)=2\int_0^\infty K_t(y)w_{0R}(y)dy,\ 
 M_{0L}(t)=2\int_{-\infty}^0 K_t(y)w_{0L}(y)dy\]
  and
\[
(\mathcal{L}\eta)(t)=\frac 1{\sqrt \pi}\int _0^t\frac{\eta(s)}{\sqrt{t-s}\sqrt s}ds.
\]
It follows that
\begin{equation}\label{eta1}
\frac{\eta_R(t)}{M_{0R}(t) -(\mathcal{L}\eta_R)(t) }=\frac{\eta_L(t)}{M_{0L} (t)+(\mathcal{L}\eta_L)(t)}.
\end{equation}
We now use the coupling conditions \eqref{wr} and  \eqref{wl} on $w$  to close the system for $\eta$.
The identity $w_L^n(t,0)=w^m_R(t,0)$ implies that
\begin{equation}\label{eta2}
(M_{0R}(t)-(\mathcal{L}\eta_R)(t))^m=(M_{0L}(t)+(\mathcal{L}\eta_L)(t))^n .
\end{equation}
From \eqref{eta1} we get
\begin{equation}
\label{rel.etas}
\eta_R(t)=\eta_L(t)(M_{0L}(t)+(\mathcal{L}\eta_L)(t))^{\frac nm-1}.
\end{equation}
Introducing this expression in \eqref{eta2} we obtain that $\eta_L$ satisfies  the following integral equation
\begin{equation}\label{eq.eta}
\Big(M_{0L}(t)+(\mathcal{L}\eta_L)(t)\Big)^{\frac nm}+\mathcal{L} \Big(\eta_L(t)\big(M_{0L}(t)+(\mathcal{L}\eta_L)(t)\big)^{\frac nm-1}  \Big)=M_{0R}(t), \quad \forall t>0.
\end{equation}
When $M_{0L}(t)\equiv 1$ and $M_{0R}(t)\equiv e^{M/m}$ there exists a unique continuous function $\eta:[0,T]\rightarrow \rr$
 that solves equation \eqref{eq.eta},  $\eta \equiv  \alpha_{n,m,M}$ for some positive constant $\alpha_{n,m,M}$. This is a consequence of the fact that  $\mathcal{L}$ applied to the function identically one gives $\mathcal{L} ({\bf 1})\equiv {\bf \sqrt {\pi}}$ and is monotone, i.e., for any $\eta_1(s)\leq \eta_2(s)$ in $(0,T)$ we have
$\mathcal{L}(\eta_1)(t)\leq \mathcal{L}(\eta_2)(t)$ for all $t\in (0,T)$. However, the analysis of solutions of \eqref{coupled-burgers}-\eqref{initial.delta.3} is more involved since we do not have enough regularity for the solutions in order to guarantee that the corresponding $\eta$ is a continuous function.

\textit{Step b). Approximation.}  Let $M>0$ and $u\in C((0,\infty),L^1(\rr))$ a solution of \eqref{coupled-burgers}-\eqref{initial.delta.3}.
We fix $R>0$ and  choose   a truncation of the solution $u$ at time $\eps$:  $u_{0,\eps}(x)= 
u(\eps,x){\bf 1}_{\{|x|<R\}}(x)$. We solve   equation \eqref{coupled-burgers}  with $u_{0,\eps}\in L^1(\rr)$ initial data. In view of Remark \ref{l1solutions} there exists a unique solution $u^\eps\in C([0,\infty),L^1(\rr))$. Moreover,
 using the contraction principle it follows that $u^\eps(t)$, converges to $u(t)$ in $C((0,\infty),L^1(\rr))$.

For $u$, respectively for $u^\eps$, we construct functions $v$ and $w$, respectively $v^\eps$ and $w^\eps$, as in Step {\it a)}.
By construction $u=w_x/w$.
Since $u^\eps(t) \rightarrow u(t)$ in $L^1(\rr))$ as $\eps\rightarrow 0$ it follows that $v^\eps(t)\rightarrow v(t)$ and
$w^\eps(t)\rightarrow w(t)$ in $L^\infty(\rr)$, so $w^\eps(t,x)\rightarrow w(t,x)$ for a.e. $x\in \rr$.
The main  goal now is to prove that as $\eps\rightarrow 0$ the following holds
\begin{equation}
\label{limit.w}
w^\eps(t,x)\rightarrow
\begin{cases}
1 +\alpha_{m,n,M}  \int ^{x/\sqrt t}_{-\infty} e^{-s^2/4}ds,& x<0,\\[6pt]
(1+\alpha_{n,m,M}\sqrt \pi)^{\frac nm-1} \Big(1+  \alpha_{n,m,M}   \int_{-\infty}^{x/\sqrt t} e^{-s^2/4}ds\Big),& x>0.
\end{cases}
\end{equation}
This gives us an explicit expression for $w$ and hence $u=w_x/w$ is given by
\[
u(t,x)=\frac{1}{\sqrt t}\frac{\alpha_{n,m,M} e^{-\frac{|x|^2}{4t}}}{1+\alpha_{n,m,M}\int_{-\infty}^{x/\sqrt t} e^{-s^2/4}ds}
\]
which finishes the proof.

In the following we prove \eqref{limit.w}.
By construction the mass of $u_{0,\eps}$ denoted by $M_{\eps}$ satisfies $M_\eps\leq M$.
  Moreover,  we have that $M_\eps\rightarrow M$ as $\eps\rightarrow 0$.
Also, using the decay of $u^\eps$ we obtain that for a.e. $t>0$
\begin{equation}
\label{decay.u.eps}
|u^\eps(t,0)| \leq \|u^\eps(t)\|_{L^\infty(\rr)}\leq C  t^{-1/2} M_\eps\leq C t^{-1/2}M.
\end{equation}

	The main  goal now is to prove that $u^\eps$, the solution of Burgers' equation \eqref{coupled-burgers} with initial data $u_{0,\eps}$, converges to the
	self-similar solution  in Step I.
 By construction $1\leq w_{0,\eps}\leq e^{M/m}$ and then
\[
M_{0L}^\eps(t)=2\int _{-\infty}^0 K_t(y)w_{0\eps}(y)dy\geq 1,
\]
and
\[
M_{0R}^\eps(t)=2\int _0^{\infty}K_t(y)w_{0\eps}(y)dy\leq e^{M/m}.
\]
Following the same construction in Step a) we introduce function $\eta_L^\eps$ which satisfies
\[
0\leq \eta_L^\eps(t)\leq Me^{M/n}, \ \text{a.e.}\ t>0,
\]
and
for all $t>0$
\begin{align*}
\Big(M^\eps_{0L}(t)+(\mathcal{L}\eta^\eps_L)(t)\Big)^{\frac nm}+\mathcal{L} \Big(\eta^\eps_L(t)\big(M^\eps_{0L}(t)+(\mathcal{L}\eta^\eps_L)(t)\big)^{\frac nm-1}  \Big)  =M^\eps_{0R}(t).
\end{align*}

Let us fix $T>0$.
We extract a subsequence $\eta_L^{\eps_n}$ such that,
$\eta_L^{\eps_n}\stackrel{\ast}{\rightharpoonup}\eta$ in $L^\infty(0,T)$.
Observe that the weak star convergence implies that for any $0<t<T$ we have
\begin{align*}
{ (\mathcal{L}\eta^\eps _L)(t)}&=\frac 1{\sqrt \pi}\int_0^t \frac{\eta_L^\eps(s)ds}{\sqrt{t-s}\sqrt s}=\frac 1{\sqrt \pi}\int_0^T \frac{1_{(0,t)}(s)}{\sqrt{t-s}\sqrt s}{\eta_L^\eps(s)}ds\\
& \qquad \rightarrow
\frac 1{\sqrt \pi}\int_0^T \frac{1_{(0,t)}(s)}{\sqrt{t-s}\sqrt s}{\eta(s)}ds=\mathcal{L}(\eta)(t).
\end{align*}
Let us now consider a sequence $\|\theta_\eps\|_{L^\infty(0,T)}\leq C$ such that
 $\theta_\eps(t)\rightarrow \theta (s)$ for all $s\in (0,T)$.  Thus,
\begin{align*}
	{ (\mathcal{L}(\eta^\eps_L\theta_\eps))(t)}&=\frac 1{\sqrt \pi}\int_0^t \frac{\eta_L^\eps(s)(\theta_\eps(s)-\theta(s))ds}{\sqrt{t-s}\sqrt s}+
	\frac 1{\sqrt \pi}\int_0^t \frac{\eta_L^\eps(s)\theta(s)ds}{\sqrt{t-s}\sqrt s}\\
	&\qquad \rightarrow 0 +\frac 1{\sqrt \pi}\int_0^t \frac{\eta(s)\theta(s)ds}{\sqrt{t-s}\sqrt s}=\mathcal{L}(\eta\theta)(t),
\end{align*}
where the first convergence follows from Lebesgue dominated convergence and the second one from the weak star convergence of $\eta^\eps_L$.

We now use that for any $t>0$,  $M_{0L}^\eps(t)\rightarrow 1$ and $M_{0R}^\eps\rightarrow e^{M/m}$, so
\[
\theta_\eps(t)=\big(M^\eps_{0L}(t)+(\mathcal{L}\eta^\eps_L)(t)\big)^{\frac nm-1} 
\]  
is bounded and converges as $\eps\rightarrow 0$  to $\theta(t)=\big(1+(\mathcal{L}\eta )(t)\big)^{\frac nm-1} $. 
 It follows that the limit point $\eta$ is a nonnegative function in $\eta\in L^\infty(0,T)$  that satisfies
\begin{equation}\label{eq.int}
\Big(1+(\mathcal{L}\eta )(t)\Big)^{\frac nm}+\mathcal{L} \Big(\eta (t)\big(1+(\mathcal{L}\eta )(t)\big)^{\frac nm-1}  \Big)=e^{M/m}, \ \forall \ t\in (0,T).
\end{equation}
We claim that the unique nonnegative solution in $L^\infty(0,T)$ of the above equation is the constant function $\eta\equiv\alpha_{n,m,M}$. Since the limit point is unique    the whole sequence $(\eta_L^\eps)_{\eps>0}$ converges to $\eta$ not only
 along a subsequence.

\textit{ Step  c). Uniqueness of the nonnegative bounded solutions of $\eqref{eq.int}$}.
We construct inductively two sequences $(a_k)_{k\geq 0}$ and $(b_k)_{k\geq 0}$ such that
$$a_k\leq { (\mathcal{L}\eta) (t)}\leq b_k$$
for all $k\geq 0$ and then prove that the two sequences converge {  to the same} limit $c_{m,n,M}$. Using Laplace's transform we obtain that the unique bounded solution $\eta$ {  of the} equation ${ (\mathcal{L}\eta) (t)}=c_{m,n,M}$ for all $t>0$ is the constant function $\eta \equiv \alpha_{n,m,M} := c_{n,m,N}/\sqrt{\pi}$.

Let us construct the two sequences $(a_k)_{k\geq 0}$ and $(b_k)_{k\geq 0}$ inductively.
We start with $a_0=0$. This implies that ${ (\mathcal{L}\eta)(t)}\leq b_0$ where $b_0$ is the unique positive solution of $(1+x)^{n/m}+x=e^{M/m}.$
This gives the first terms in the inductive argument $a_0$, $b_0$.

Now, let us assume that $a_k\leq { (\mathcal{L}\eta)(t)}\leq b_k$. It follows that
\[
 \big(1+(\mathcal{L}\eta )(t)\big)^{\frac nm}+{ (\mathcal{L}\eta) (t)}  \big(1+a_k\big)^{\frac nm-1}  \leq e^{M/m}
 \leq  \big(1+(\mathcal{L}\eta )(t)\big)^{\frac nm}+{ (\mathcal{L}\eta) (t)}  \big(1+b_k\big)^{\frac nm-1}
\]
and then $a_{k+1}\leq { (\mathcal{L}\eta)(t)}\leq b_{k+1}$ where $b_{k+1}$ and $a_{k+1}$ are the unique positive solutions of the equations
\[
(1+x)^{n/m}+x (1+a_k )^{\frac nm-1}=e^{M/m}
\]
and
\[
(1+x)^{n/m}+x (1+b_k )^{\frac nm-1}=e^{M/m},
\]
respectively. Also by construction $a_0=0<a_1$ and then $b_0=b_1>b_2$. Inductively, we obtain $a_k\leq a_{k+1}$ and $b_k\geq b_{k+1}$.
Denoting their limits by $L_a$ and $L_b$ we get that $L_a\leq L_b$ and
\[
(1+L_b)^{n/m}+L_b (1+L_a )^{\frac nm-1}=e^{M/m}=(1+L_a)^{n/m}+L_a (1+L_b )^{\frac nm-1}.
\]
Since $n\geq m$ it follows that
\[
\frac{1+L_a-L_b}{1+L_b-L_a}=\Big(\frac{1+L_b}{1+L_a}\Big)^{\frac{n}m-1}\geq 1
\]
and thus $$L_a=L_b=c_{n,m,M}$$ where $c_{n,m,M}$ is the unique positive solution of
\[
(1+x )^{\frac nm-1}(1+2x)=(1+x)^{n/m}+x (1+x )^{\frac nm-1}=e^{M/m}.
\]
Clearly $\alpha_{n,m,M}=c_{n,m,N}/\sqrt{\pi}$ satisfies \eqref{eq.alpha}.

\textit{ Step  e). Proof of \eqref{limit.w}}.
From the previous steps we have $\eta_L^{\eps_n}\stackrel{\ast}{\rightharpoonup}\alpha_{n,m,M}$ in $L^\infty(0,T)$.
Using now  identity  \eqref{rel.etas} for $\eta_R^\eps$ we get
\[
\eta^\eps_R(t)=\eta^\eps_L(t)(M_{0L}^\eps(t)+(\mathcal{L}\eta^\eps_L)(t))^{\frac nm-1}
\]
and
it follows that
\[
\eta^\eps_R(t)\stackrel{\ast}{\rightharpoonup} \alpha_{n,m,M}(1+\alpha_{n,m,M}\sqrt \pi)^{\frac nm-1}\ \text{in}\ L^\infty(0,T).
\]
 Going back to the representation of \eqref{w_Lx} we obtain that
\begin{align*}
w^\eps_L(t,x)&=\int_{-\infty}^0 (K_t(x-y)+K_t(x+y))w^\eps_{0L}(y)dy+2\int_0^t K_{t-\tau}(x)w^\eps_{L,x}(0,\tau)d\tau\\
&=\int_{-\infty}^0  (K_t(x-y)+K_t(x+y))w^\eps_{0L}(y)dy+2\int_0^t K_{t-\tau}(x)\frac{\eta_L^\eps(0,\tau)}{\sqrt \tau}d\tau\\
& \qquad \rightarrow 1 +2\alpha_{n,m,M}\int _0^t K_{t-\tau}(x)\frac{d\tau}{\sqrt \tau} =1+\frac{\alpha_{m,n,M}}{\sqrt \pi}\int _0^1 e^{-\frac{x^2}{4t\sigma}}\frac{d\sigma}{\sqrt \sigma \sqrt {1-\sigma}}\\
& \qquad =1 +\alpha_{m,n,M}  \int_{-\infty}^{x/\sqrt t} e^{-s^2/4}{  ds.}
\end{align*}
{ Here we} used with $a=x/t$, $x<0$,  the following identity, known as Craig's formula \cite{craig}:
\[
\int _0^1 e^{-\frac{a^2}{4\sigma}}\frac{d\sigma}{\sqrt \sigma \sqrt {1-\sigma}}=\pi {\rm erfc}(\frac{|a|}2)=
\sqrt{\pi}\int_{|a|}^\infty e^{-s^2/4}ds=\sqrt{\pi}\int_{-\infty}^{-|a|} e^{-s^2/4}ds.
\]
Similarly
\begin{align*}
w^\eps_R(t,x) & =\int_0^\infty (K_t(x-y)+K_t(x+y))w^\eps_{0R}(y)dy-2\int_0^t K_{t-\tau}(x)w^\eps_{R,x}(0,\tau)d\tau \\
&=\int_0^{\infty}   (K_t(x-y)+K_t(x+y))w^\eps_{0R}(y)dy-2\int_0^t K_{t-\tau}(x)\frac{\eta_R^\eps(0,\tau)}{\sqrt \tau}d\tau\\
& \qquad \rightarrow e^{M/m} - \alpha_{n,m,M}(1+\alpha_{n,m,M}\sqrt \pi)^{\frac nm-1}  \int_{x/\sqrt t}^{\infty}e^{-s^2/4}ds\\
& \qquad=e^{M/m} - \alpha_{n,m,M}(1+\alpha_{n,m,M}\sqrt \pi)^{\frac nm-1} \Big(2\sqrt {\pi}- \int_{-\infty}^{x/\sqrt t} e^{-s^2/4}ds\Big)\\
& \qquad=(1+\alpha_{n,m,M}\sqrt \pi)^{\frac nm-1} \Big(1+  \alpha_{n,m,M}   \int_{-\infty}^{x/\sqrt t} e^{-s^2/4}ds\Big).
\end{align*}
This shows that \eqref{limit.w} holds and the proof is finished.
\end{proof}

\begin{theorem}\label{burgers.network.delta}Let $q=2$ and $n\geq m$.
For any positive number $M$  there exists a unique nonnegative solution $\bu_M\in C((0,\infty),L^1(\Gamma))$
satisfying \eqref{heat.dirac.2} with \eqref{initial.delta.2}.  Moreover, this solution satisfies that all the components defined on
the same intervals $\rr_{-}$ or $\rr_+$ are equal: i.e. $u_i=u_{i'}$ for all $1\leq i, i'\leq n$ and $u_{n+j}=u_{n+j'}$ for all $1\leq j,j'\leq m$.
\end{theorem}

\begin{remark} {\rm
When $M<0$ there exists e unique nonpositive solution. Indeed, $v=-u\geq 0$ solve the same equation and we reduce the problem to the case of nonnegative solutions.}
\end{remark}

\begin{proof}[Proof of Theorem \ref{burgers.network.delta}]
  Existence follows from Theorem \ref{burgers.nm}.
Hence, we concentrate in showing that for any solution $\bu_M\in C((0,\infty),L^1(\Gamma))$  its components verifies $u_i=u_{i'}$ for all $1\leq i, i'\leq n$ and $u_{n+j}=u_{n+j'}$ for all $1\leq j,j'\leq m$.

Let us choose a function $\psi\in BC(\rr)$ with $\psi(0)=0$. Taking in \eqref{initial.delta.2} as test function
 a function $\bp$ with  the $k$-th component,
$\varphi_k=\psi$, $\varphi_i=0$, for $i\neq k$, we obtain that
\[
\lim _{t \searrow 0}\int_{I_k} u_k(t,x)\psi(x)dx  =0.
\]
Let us assume that $n\geq 2$ and prove that $u_1=u_2$.  The other cases are similar.
We apply the Hopf-Cole transform in order to reduce the equations satisfied by $u_1$ and $u_2$ to the heat equation.
Let us set
\[
v_i(t,x)=\int_{-\infty}^x u_i(t,y)dy, \ i\in \{1,2\}.
\]
Since $u_i \in C((0,\infty),L^1(I_i))$ it follows that $v_i\in C((0,\infty),L^\infty(I_i)) $ and satisfy
\begin{equation*}
	\left\{
	\begin{array}{ll}
		\partial_t v_i -\partial_{xx}v_i-(v_{i,x})^2=0,& t>0,x< 0,i\in \{1,2\}, \\[8pt]
		v_1(t,0-)=v_2(t,0-), &  t>0,\\[8pt]
		v_{1,x}(t,0-)=v_{2,x}(t,0-), &t>0,\\[8pt]
		0\leq v_i(t,x)\leq M, \quad & i\in \{1,2\}.
	\end{array}
	\right.
\end{equation*}
In order to  obtain  the initial datum at $t=0$, for a given $x<0$ we choose a continuous function $\psi$ such that
$\psi(y)=1$ for $y<x$ and $\psi(y)=0$ for $y\in (x/2,0)$. Thus, using the fact that we deal with nonnegative solutions, we have
\[
v_i(t,x)=\int _{-\infty}^0 u_i(t,y) \chi _{(-\infty,x)}(y)dy\leq \int _{-\infty}^0 u_i(t,y) \psi(y)dy\rightarrow 0, \ \text{as}\ t\searrow 0.
\]
Let us take $w_i(t,x)=\exp(v_i(t,x))$.  These functions $w_i$ satisfy
\begin{equation*}
	\left\{
	\begin{array}{ll}
		\partial_t w_i -\partial_{xx}w_i=0,& t>0,x< 0,i\in \{1,2\}, \\[8pt]
		w_1(t,0-)=w_2(t,0-), &  t>0,\\[8pt]
		w_{1,x}(t,0-)=w_{2,x}(t,0-), &t>0\\[8pt]
   \lim_{t\searrow 0} w_i(t,x)={ 1},\ & x<0, i\in \{1,2\},\\[8pt]
   1\leq  w_i(t,x)\leq e^M, & t>0, \  x<0,  i\in \{1,2\}.
	\end{array}
	\right.
\end{equation*}
It follows that $\widetilde w=w_1-w_2$ is a uniformly bounded solution of the heat   equation in  $(-\infty,0)$ with   Neumann boundary condition at $x=0$ and initial datum at $t=0$ identically zero.    Then, it  follows that $w_1\equiv w_2$ and hence $u_1\equiv u_2$.

Let us now consider the function $u(t,x)$ given by  $u(t,x)=u_i(t,x)$ for $x<0$ and $u(t,x)=u_j(t,x)$ for $x>0$. The new function $u$ is defined in the real line instead of on the graph $\Gamma$ and satisfies the system \eqref{coupled-burgers} with initial datum at $t=0$   a multiple of the delta function  taken in the sense \eqref{initial.delta.3}. In view of Theorem \ref{burgers.nm}
  we have  $u(t,x)=u_M(t,x)= G_M(x/\sqrt t)/{\sqrt t}$ the unique profile $G_M$ being given by \eqref{perfil.fm}. The proof is now complete.
\end{proof}

\section{The first term in the asymptotic behavior}\label{asymptotic behavior}
In this section we  prove Theorem \ref{first.term}. To fix ideas we consider the nonlinearity $f(u)=-|u|^{q-1}u$, $q\geq 2$ and $n\geq m$.
Let   $\bu_0\in L^1(\Gamma)\cap L^\infty(\Gamma)$
 and $\bu$ the solution obtained in Theorem \ref{decay.estimates}.
As we have mentioned, in order to  prove Theorem \ref{first.term}
we proceed by using a scaling argument. We introduce the family $(\bu^\lambda)_{\lambda>1}$:
\[
u^\lambda_k(t,x)=\lambda u_k(\lambda^2 t,\lambda x), \qquad \ x\in I_k, \ t>0, \ k=1,\dots, m+n.
\]
It follows that $\bu^\lambda$ satisfies
\[
\left\{
\begin{aligned}
& \bu^\lambda\in L^2((0,T); V), \quad \partial_t\bu^\lambda\in L^2((0,T); V'), \\
 & \langle\partial_t\bu^\lambda(t),\bpsi \rangle_{V',V} + (\partial_x \bu^\lambda(t), \partial_x\bpsi)_{L^2(\Gamma)} = \lambda^{2-q}(f(\bu^\lambda(t)),\partial_x\bpsi)_{L^2(\Gamma)},
\,\,\mbox{a.e. in } \, (0,\infty),\, \forall \bpsi\in V, \\
& \bu^\lambda(0)=\lambda\bu_0(\lambda \cdot) .
\end{aligned}
\right.
\]
 In view of the estimates obtained in Theorem \ref{decay.estimates} we obtain the following uniform bounds for the family $(\bu^\lambda)_{\lambda>1}$.

\begin{proposition}
 For any $0<t_1<t_2<\infty$, the family $(\bu^\lambda)_{\lambda>1}$ satisfies
	\begin{equation}
\label{est.1}
\|\bu^\lambda \|_{L^\infty((t_1,t_2); L^p(\Gamma))}\leq C(t_1,\|\bu_0\|_{L^1(\Gamma)}), \ 1\leq p\leq \infty,	
	\end{equation}
	\begin{equation}
\label{est.2}
  \int_{t_1}^{t_2}\int_{\Gamma} |\partial_x\bu^\lambda|^2dxdt\leq t_1^{-1/2} \|\bu_0\|_{L^1(\Gamma)}^2,
\end{equation}
	\begin{equation}
\label{est.4}
\|\bu^\lambda(t) \|_{L^\infty(\Gamma)}\leq C(\|\bu_0\|_{L^1(\Gamma)}, \|\bu_0\|_{L^\infty(\Gamma)} ) \frac{\lambda}{(1+\lambda^2t)^{1/2}}, \quad \forall t>0,
\end{equation}
\begin{equation}
  \label{est.3}
  \|\partial_t \bu^\lambda\|_{L^2(t_1,t_2; V')}\leq C(t_1, q, \|\bu_0\|_{L^1(\Gamma)}).
\end{equation}

\end{proposition}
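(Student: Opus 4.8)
The plan is to derive each of the four bounds from the corresponding estimate for the unscaled solution $\bu$ in Proposition \ref{decay.estimates}, exploiting that the parabolic scaling $u_k^\lambda(t,x)=\lambda u_k(\lambda^2 t,\lambda x)$ is tuned exactly to the $L^1\to L^p$ smoothing rate. First I would record the elementary change-of-variable identities
\[
\|\bu^\lambda(t)\|_{L^p(\Gamma)}=\lambda^{1-\frac1p}\|\bu(\lambda^2 t)\|_{L^p(\Gamma)},\qquad \int_\Gamma|\partial_x\bu^\lambda(t)|^2dx=\lambda^3\int_\Gamma|\partial_x\bu(\lambda^2 t)|^2dx,
\]
obtained by substituting $y=\lambda x$ (and $s=\lambda^2 t$ for time integrals).

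For \eqref{est.1}, combining the first identity with Proposition \ref{decay.estimates}(2) gives $\|\bu^\lambda(t)\|_{L^p(\Gamma)}\le \lambda^{1-\frac1p}C(p)\|\bu_0\|_{L^1(\Gamma)}(\lambda^2t)^{-\frac12(1-\frac1p)}=C(p)\|\bu_0\|_{L^1(\Gamma)}t^{-\frac12(1-\frac1p)}$, where the powers of $\lambda$ cancel exactly; taking the supremum over $t\in(t_1,t_2)$ yields \eqref{est.1} with a constant independent of $\lambda$. This scale invariance is the structural fact behind all four bounds. For \eqref{est.4} I would use $\|\bu^\lambda(t)\|_{L^\infty(\Gamma)}=\lambda\|\bu(\lambda^2t)\|_{L^\infty(\Gamma)}$ together with the two available controls on $\bu$: the maximum principle $\|\bu(s)\|_{L^\infty(\Gamma)}\le\|\bu_0\|_{L^\infty(\Gamma)}$ (Corollary \ref{particular.apriori}(i)-(ii)) for small $s$, and the decay $\|\bu(s)\|_{L^\infty(\Gamma)}\le C\|\bu_0\|_{L^1(\Gamma)}s^{-1/2}$ for large $s$. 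Writing $s=\lambda^2t$ and comparing with $(1+s)^{-1/2}$ separately on $\{s\le1\}$ and $\{s\ge1\}$ produces $\|\bu(\lambda^2t)\|_{L^\infty(\Gamma)}\le C(1+\lambda^2t)^{-1/2}$ with $C=C(\|\bu_0\|_{L^\infty},\|\bu_0\|_{L^1})$, which after multiplication by $\lambda$ is exactly \eqref{est.4}.

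For the gradient estimate \eqref{est.2} I would apply the energy identity of Lemma \ref{ro} (with $\rho(s)=s^2$) directly to $\bu^\lambda$, which solves the rescaled system \eqref{sis.ulambda} with nonlinearity $\lambda^{2-q}f$. The boundary contribution is $(n-m)\lambda^{2-q}\int_0^{\bu^\lambda(t,0)}f(s)\,ds$; since $\int_0^u|s|^{q-1}s\,ds\ge0$ for every $u$ and $n\le m$, this term is nonpositive, so $\frac{d}{dt}\|\bu^\lambda(t)\|_{L^2(\Gamma)}^2+2\int_\Gamma|\partial_x\bu^\lambda|^2dx\le0$. Integrating over $(t_1,t_2)$ and bounding $\|\bu^\lambda(t_1)\|_{L^2(\Gamma)}$ by \eqref{est.1} (with $p=2$) gives \eqref{est.2} with a constant depending only on $t_1$ and the initial data, uniformly in $\lambda$; this is precisely the point where the hypothesis $n\le m$ is used.

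Finally, for \eqref{est.3} I would test \eqref{sis.ulambda} against $\bpsi\in V$ and use that the $V$-norm dominates $\|\partial_x\bpsi\|_{L^2(\Gamma)}$ to obtain the pointwise-in-time bound $\|\partial_t\bu^\lambda(t)\|_{V'}\le\|\partial_x\bu^\lambda(t)\|_{L^2(\Gamma)}+\lambda^{2-q}\|\bu^\lambda(t)\|_{L^{2q}(\Gamma)}^q$, after observing that $\|f(\bu^\lambda)\|_{L^2(\Gamma)}=\|\bu^\lambda\|_{L^{2q}(\Gamma)}^q$. Squaring and integrating in time, the first term is controlled by \eqref{est.2}; for the second I would use $\lambda^{2-q}\le1$ (valid since $q>2$ and $\lambda>1$) and the $\lambda$-uniform decay $\|\bu^\lambda(t)\|_{L^{2q}(\Gamma)}\le C\|\bu_0\|_{L^1}t^{-\frac12(1-\frac1{2q})}$ from \eqref{est.1}, so that $\int_{t_1}^{t_2}\|\bu^\lambda\|_{L^{2q}}^{2q}dt\le\int_{t_1}^\infty Ct^{-(q-\frac12)}dt<\infty$ because $q>2$ forces $q-\frac12>1$. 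This yields \eqref{est.3} with a constant independent of both $\lambda$ and $t_2$. The only genuinely delicate points are verifying the sign of the boundary term in \eqref{est.2} (hence the role of $n\le m$) and checking that the factor $\lambda^{2-q}$ and the $\lambda$-behaviour hidden in $\|\bu^\lambda\|_{L^{2q}}$ conspire to stay bounded; the scale invariance recorded at the outset makes both transparent, so the main work is bookkeeping rather than a substantially new idea.
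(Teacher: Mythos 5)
Your proposal is correct and follows essentially the same route as the paper: \eqref{est.1} and \eqref{est.2} come from Proposition \ref{decay.estimates} via the parabolic scaling identities, \eqref{est.4} from combining the maximum principle with the $t^{-1/2}$ decay, and \eqref{est.3} from testing the rescaled equation against $\bpsi\in V$. The only (harmless) variations are that for \eqref{est.2} you apply the energy identity of Lemma \ref{ro} directly to $\bu^\lambda$ rather than rescaling the energy estimate already proved for $\bu$ --- equivalent, since the sign condition $n\le m$ is precisely what underlies Proposition \ref{decay.estimates}(1) --- and that you spell out the convergence of $\int_{t_1}^{\infty}t^{-(q-1/2)}\,dt$, a detail the paper leaves implicit.
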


\begin{proof}
		The first two estimates are consequences of the results obtained in Theorem \ref{decay.estimates}.
		Using that $\bu_0\in L^1(\Gamma)\cap L^\infty(\Gamma)$ we have
		\[
		\|\bu(t)\|_{L^\infty(\Gamma)}\leq \min \Big\{ \|\bu_0\|_{L^\infty(\Gamma)}, t^{-1/2}\|\bu_0\|_{L^1(\Gamma)} \Big\}\leq \frac{C(\|\bu_0\|_{L^\infty(\Gamma)}, \|\bu_0\|_{L^1(\Gamma)})  }
		{(t+1)^{1/2}}.
		\]
		Then the definition of $\bu^\lambda$ gives us the desired estimate.
		
		For the last one we remark that for any $\bpsi \in V$ we have
	\begin{align}
	\label{est.v'}	|\langle \partial_t \bu^\lambda,\bpsi \rangle_{V',V}|&\leq |(\partial_x \bu^\lambda, \partial_x\bpsi)_{L^2(\Gamma)}|+\lambda^{2-q} |(f(\bu^\lambda),\partial_x\bpsi)_{L^2(\Gamma)}|
		 	\\
		\nonumber 	&\leq \|\partial_x \bu^\lambda\|_{L^2(\Gamma)} \|\partial_x\bpsi\|_{L^2(\Gamma)} +\lambda^{2-q} \|f(\bu^\lambda)\|_{L^2(\Gamma)} \|\partial_x\bpsi\|_{L^2(\Gamma)} .
	\end{align}
	This implies that
	\[
	\|\partial_t \bu^\lambda(t)\|_{V'}\leq \|\partial_x \bu^\lambda(t)\|_{L^2(\Gamma)} +\lambda^{2-q}\|\bu^\lambda(t)\|_{L^{2q}(\Gamma)}^q
	\]
	and since $\lambda>1$ and $q\geq 2$ we get
	\begin{align*}
		\|\partial_t \bu^\lambda\|_{L^2((t_1,t_2);V')} &\lesssim \|\partial_x \bu^\lambda\|_{L^2((t_1,t_2); L^2(\Gamma))} + \lambda^{ 2-q}
		\|\bu^\lambda\|_{L^{2q}((t_1,t_2); L^{2q}(\Gamma))}^{ q}\\
		&\leq C (t_1,q,\|\bu_0\|_{L^1(\Gamma)}).
	\end{align*}
This finishes the proof.
\end{proof}

The previous estimates guarantee the compactness of the family $(\bu^\lambda)_{\lambda>1}$.
We now go back to the proof of the main result.

\begin{proof}[Proof of Theorem \ref{first.term}]
We  remark that when $p=1$ property \eqref{q>2}/\eqref{q=2} is equivalent to the existence of a time $t_0>0$ such that
 $\bu_\lambda(t_0)\rightarrow \bu_M(t_0)$ in $L^1(\Gamma)$ as $\lambda\rightarrow\infty$ since
 $$\|\bu_\lambda(t_0)-\bu_M(t_0)\|_{L^1(\Gamma)}=\|\bu(\lambda^2 t_0)-\bu_M(\lambda^2 t_0)\|_{L^1(\Gamma)}.$$
 In the following we will show the convergence of $\bu_\lambda$ toward a function ${\bf w}$ that will be identified later.

\textit{Step I. Compactness}.
Let us consider   the graph $\Gamma_R=\Gamma\cap \{|x|<R\}$ obtained by truncated the graph $\Gamma$. It has all the edges of finite lenght.  
Since $(\bu^\lambda)_{\lambda>1}$ is uniformly bounded in   $L^2_{loc}((0,\infty);V)$ it is also bounded in $H^1(\Gamma_R)$. The uniform estimate  of $(\partial_t\bu^\lambda)_{\lambda>1}$ in 
$L^2_{loc}((0,\infty); V')$  implies that $(\partial_t\bu^\lambda)_{\lambda>1}$  is uniformly bounded in $L^2_{loc}((0,\infty); (H_0^1(\Gamma_R))')$. Since  $H^1(\Gamma_R)$ is compactly embedded in $L^2(\Gamma_R)$ \cite[Lemma 3.7, p.71]{mugnolo}
by Aubin-Lions compactness criterium we find a limit point ${\bf w}=(w_1,\dots, w_{m+n})$ such that up to a subsequence
$\bu^\lambda\rightarrow {\bf w} \ \text{in}\ L^2_{loc}((0,\infty); L^2 (\Gamma_R))$. By a diagonal argument, up to a subseqnece,  $\bu^\lambda\rightarrow {\bf w} \ \text{in}\ L^2_{loc}((0,\infty); L^2_{loc} (\Gamma))$.
Moreover, $\bu^\lambda \rightharpoonup  {\bf w}$ in $L^2_{loc}((0,\infty); V)$ and $\partial_t\bu^\lambda\rightharpoonup  \partial_t{\bf w}$ in
$L^2_{loc}((0,\infty); V')$. In particular, the limit point ${\bf w}\in C((0,\infty),L^2(\Gamma))$. 

A different argument consisting in applying Aubin-Lions's compactness argument on bounded sets  of each of  the half line $I_k$, $k=1,\dots, m+n$, of the graph has been used in \cite[Proof of Th.~3.1]{rossi2020}.

Let us now concentrate on the nonlinear term. Since $\bu^\lambda\rightarrow {\bf w} \ \text{in}\ L^2_{loc}(0,\infty; L^2_{loc}(\Gamma))$ we have
 $\bu^\lambda(t,x)\rightarrow {\bf w}(t,x)$ for a.e. $t$ and $x$ and
\[
\bu^\lambda\rightarrow {\bf w} \ \text{in}\ L^1_{loc}((0,\infty)\times\Gamma).\]
The  a.e. convergence and the fact that $\bu^\lambda(t)$ satisfies
$
\|\bu^\lambda(t)\|_{L^\infty(\Gamma)}\leq Ct^{-1/2}$
imply that ${\bf w}\in L^\infty_{loc}((0,\infty), L^\infty(\Gamma))$ and for a.e. $t>0$, satisfies the same bound  $\|\bw(t)\|_{L^\infty(\Gamma)}\leq Ct^{-1/2}$.
 In particular,   this  implies that
$
\ f(\bu^\lambda)\rightarrow f({\bf w}) \ \text{in}\ L^2_{loc}(0,\infty; L^2_{loc}(\Gamma))
$
and
\begin{equation}
\label{weak-limit-nonlinear}
   f(\bu^\lambda)\rightharpoonup  f({\bf w}) \ \text{in}\ L^2_{loc}(0,\infty; L^2(\Gamma)).
\end{equation}

\medskip

\textit{Step II. Tail control.}
We now prove that for any $\lambda>1$ on the interval $I_k$ we have
\[
 \int_{I_k,\ |x|>2R}|u^\lambda_k(t,x)|dx \leq   \int_{I_k,\ |x|>R}|u_{k0}(x)|dx  +C\Big(\frac t{R^2}+\frac{t^{1/2}}{R}\Big),
\]
for some constant $C=C(\rho,\|\bu_0\|_{L^1(\Gamma)},\|\bu_0\|_{L^\infty(\Gamma)})$. Since $q\geq 2$ the comparison principle holds and it is sufficient to consider the case of nonnegative solutions.

Using the equation satisfied by $\bu^\lambda$ we obtain that for any $\bpsi\in D(\Delta_\Gamma)$
\begin{equation}
\label{identity.u.234}
  (\bu^\lambda(t),\bpsi)_{L^2(\Gamma)}=(\bu^\lambda(0),\bpsi)_{L^2(\Gamma)}+ \int_0^t (\bu^\lambda,\partial_{xx}\bpsi)_{L^2(\Gamma)}ds +\lambda^{2-q}\int _0^t (f(\bu^\lambda), \partial_x \bpsi)_{L^2(\Gamma)}ds.
\end{equation}
Let us choose a   function $\psi^N\in C^2(\rr)$ such that $0\leq \rho\leq 1$, $\rho\equiv 1$ for $2\leq |x|\leq N$ and supported in  $\{1<|x|<N\}$ such that $\|\psi^N_x\|_{L^\infty(\rr)}$ and  $\|\psi^N_{xx}\|_{L^\infty(\rr)}$ do  not depend on $N$. Set $\psi_R^N(x)=\psi^N(x/R)$.
For each $k=1,\dots, m+n$ we take $\psi =(0,\dots,0, \psi_k,0\dots, 0)\in D(A)$
with $\psi_k(x)=\psi_R^N(x)$ on  $I_k$. We obtain for some constant $C=C(\|u_0\|_{L^1(\Gamma)},\|u_0\|_{L^\infty(\Gamma)} )$ the following
\begin{align*}
	 \int_{I_k,\ 2R<|x|<NR} u^\lambda_k(t,x) dx& \leq   \int_{I_k,\ |x|>R}u_{k0}(x)dx +C \Big(t \|(\psi_R^N)_{xx}\|_{L^\infty(\rr)}+t^{1/2}\|(\psi_R^N)_{xx}\|_{L^\infty(\rr)} \Big)\\
	 &  \leq   \int_{I_k,\ |x|>R}u_{k0}(x)dx +C \Big(\frac t{R^n} \|\psi^N_{xx}\|_{L^\infty(\rr)}+\frac{t^{1/2}}{R^{1/2}}\|\psi^N_{xx}\|_{L^\infty(\rr)} \Big).
\end{align*}
Letting $N$ to infinity we obtain  on the interval $I_k$ 
\[ \int_{I_k,\ 2R<|x|} u^\lambda_k(t,x) dx \lesssim    \int_{I_k,\ |x|>R}u_{k0}(x)dx +C \Big(\frac t{R^2}+\frac{t^{1/2}}{R}\Big).
\]
The arguments are similar to the one-dimensional case, see \cite[Lemma 3.3]{MR3190994}. We leave the details to the reader.

The above estimates shows that the convergence of $\bu^\lambda$ towards ${\bf w}$ holds not only in $L^1_{loc}((0,\infty)\times \Gamma)$ but also in $L^1_{loc}((0,\infty);L^1( \Gamma))$. This also guarantees that ${\bf w}$ belongs to $L^1_{loc}((0,\infty);L^1( \Gamma))$ and satisfies
\[
\int_{\Gamma} {\bf w}(t,x)dx=M, \ \mbox{a.e.}\ t>0
\]
and a similar tail control for a.e. $t>0$:
\[
 \int_{I_k, |x|>2R}|w_k(t,x)|dxdt\leq   \int_{I_k, |x|>R}|u_{0k}(x)|dxdt + C \Big(\frac t{R^2}+\frac{t^{1/2}}{R}\Big).
\]

\textit{Step III. Equation satisfied by ${\bf w}$.} We recall that $\bu^\lambda$ satisfies for any $\bpsi\in V$
$$\displaystyle\langle\partial_t\bu^\lambda,\bpsi\rangle_{V',V} + (\partial_x \bu^\lambda, \partial_x\bpsi)_{L^2(\Gamma)} = \lambda^{2-q}(f(\bu^\lambda),\partial_x\bpsi)_{L^2(\Gamma)}, \ \mbox{a.e.}\ t>0.$$

Passing to the limit $\lambda\rightarrow\infty$ in the above equation   we  obtain that  ${\bf w}\in  L^\infty_{loc}((0,\infty), L^\infty(\Gamma))$ satisfies
\[
\left\{
\begin{aligned}
&{\bf w} \in L^2_{loc}((0,\infty); V), \quad \partial_t{\bf w}\in L^2_{loc}((0,\infty); V'), \mbox{ and for all }\bpsi\in V,\\
 &\langle\partial_t {\bf w}(t),\bpsi\rangle_{V',V} + (\partial_x {\bf w}(t), \partial_x\bpsi)_{L^2(\Gamma)} = \delta_2^q (f({\bf w}),\partial_x\bpsi)_{L^2(\Gamma)},
\,\,\mbox{a.e. on } \, (0,\infty),\\
\end{aligned}
\right.
\]
%
where $\delta_2^q=1$ if $q=2$ and vanishes otherwise.
In the case $q>2$ we obtain that ${\bf w}$ is a weak solution of the heat equation whereas
when $q=2$ we obtain that $\bf{w}$ is a solution of the Burgers' equation. Since $\bw(t)\in L^1(\Gamma)\cap L^\infty(\Gamma)$ for a.e. $t>0$, solving the above equation for any $t>t_0$ with $\bw(t_0)\in L^1(\Gamma)\cap L^\infty(\Gamma)$ we obtain  that $\bw\in C([t_0,\infty),L^1(\Gamma))$ and finally  $\bw\in C((0,\infty),L^1(\Gamma))$.

Let us now identify the initial datum in the above system.
Using   identity \eqref{identity.u.234}  we obtain for any $\bpsi\in D(\dg)$ that
\begin{align}
\label{gigi}
\nonumber \Big| & (\bu^\lambda(t),\bpsi)_{L^2(\Gamma)}-\int _{\Gamma} \bu_0(x)\bpsi\left(\frac x\lambda\right)dx\Big|\\
\nonumber& \leq \|\bpsi_{xx}\|_{L^2(\Gamma)}\int _0^t  \|\bu^\lambda(s)\|_{L^2(\Gamma)} ds + \lambda^{2-q}\|  \bpsi_x\|_{L^\infty(\Gamma)} \int _0^t \|\bu^{\lambda}(s)\|_{L^q(\Gamma)}^qds\\
& \leq \|\bpsi_{xx}\|_{L^2(\Gamma)} \|\bu_0\|_{L^1(\Gamma)} \int _0^t  s^{-1/4} ds +\lambda^{2-q}\|  \bpsi_x\|_{L^\infty(\Gamma)}  \int _0^t \|\bu^{\lambda}(s)\|_{L^\infty(\Gamma)}^{q-1}\|\bu^{\lambda}(s)\|_{L^1(\Gamma)}{ ds.}
\end{align}
Using \eqref{est.4}  the last term satisfies
\begin{align*}
  \lambda^{2-q} \int _0^t \|\bu^{\lambda}(s)\|_{L^\infty(\Gamma)}^{q-1}\|\bu^{\lambda}(s)\|_{L^1(\Gamma)}ds& \leq \lambda^{2-q} C(\|\bu_0\|_{L^1(\Gamma)}, \|\bu_0\|_{L^\infty(\Gamma)})\int _0^t \left(\frac{\lambda^2}{1+\lambda^2s}\right)^{(q-1)/2}ds\\
& =
 \begin{cases}
 O(\lambda^{-1}),& q>2,\\
t^{1/2},& q=2.
\end{cases}
\end{align*}
Letting $\lambda\rightarrow\infty$ in \eqref{gigi} we obtain that for any smooth function $\bpsi\in  D(\dg)$
 the following holds
\[
|({\bf w}(t),\bpsi)_{L^2(\Gamma)}-M\bpsi(0)|\lesssim (t^{3/4}+t^{1/2} ) \|\bpsi\|_{D(\dg)}.
\]
This implies that
\[
\lim_{t\downarrow 0} ({\bf w}(t),\bpsi)_{L^2(\Gamma)}=M\bpsi(0), \quad \forall\ \bpsi\in D(\dg).
\]
Using the tail control for $\bu^\lambda$ and for ${\bf w}$  we can obtain that the
same limit holds for all  functions $\bpsi=(\psi_k)_{k=1}^{m+n}$ with $\psi_k\in BC(I_k)$, $k=\{1,\dots, n+m\}$ and
satisfying $\psi_i(0)=\psi_j(0):=\bpsi(0)$, $\forall\ i,j\in \{1,\dots,n+m\}$.

\textit{Step IV. Uniqueness and characterization of the limit profile.} The uniqueness result in Section \ref{self-similar} shows that in both cases $q>2$ or $q=2$, we have ${\bf w}=\bu_M$ and that the whole sequence $(\bu_\lambda)_{\lambda>1} $ converges to $\bu_M$ not only   along  a subsequence.
The strong convergence of $\bu_\lambda$ toward $\bu_M$ in $L^1_{loc}((0,\infty)\times L^1(\Gamma))$ shows the existence of a time $t_0$ such that
$\bu_\lambda(t_0)\rightarrow \bu_M(t_0)$ in $L^1(\Gamma)$. This proves \eqref{q>2} for $p=1$. The general case $1\leq  p<\infty$ follows by using the  case $p=1$ and  the interpolation inequality combined with  the $L^{2p}$ decay of both $\bu(t)$ and $\bu_M(t)$ as $t^{-1/2(1-1/(2p))}$,
\begin{align*}
\label{}
  t^{\frac 12(1-\frac 1p)}\| \bu(t)-\bu_M(t)\|_{L^p(\Gamma)}&\leq t^{\frac 12(1-\frac 1p)}\| \bu(t)-\bu_M(t)\|_{L^1(\Gamma)}^{\frac 1{2p-1}} \| \bu(t)-\bu_M(t)\|_{L^{2p}(\Gamma)}^{\frac{2p-2}{2p-1}}\\
  &\lesssim t^{\frac 12(1-\frac 1p)}\| \bu(t)-\bu_M(t)\|_{L^1(\Gamma)}^{\frac 1{2p-1}} (\| \bu(t)\|_{L^{2p}(\Gamma)}+\|\bu_M(t)\|_{L^{2p}(\Gamma)})^{\frac{2p-2}{2p-1}}\\
  & \lesssim t^{\frac 12(1-\frac 1p)}\| \bu(t)-\bu_M(t)\|_{L^1(\Gamma)}^{\frac 1{2p-1}} \left(t^{-\frac 12 (1-\frac{1}{2p})}\right)^{\frac{2p-2}{2p-1}}\\
  &=
\| \bu(t)-\bu_M(t)\|_{L^1(\Gamma)}^{\frac 1{2p-1}}=o(1), \ t\rightarrow\infty.
\end{align*}

The proof is now complete.
\end{proof}

\section{ Appendix: Comments on the semigroup approach}\label{semigroup-approach}

In order to explain the difficulties in using the semigroup approach for this type of problem, in the following we compute explicitly the linear semigroup $S(t)\bp$ and the commutator  $[\partial_x,S(t)\bp]$. A related work in the case of the Schr\"odinger equation is given in \cite{MR3834707}.
We recall that in \cite[Th. 1.2]{MR2679594} the fact that the semigroup commutes with the derivative $\partial_x$ has been used several times.

Explicit computations show that the linear semigroup can be written as follows.
\begin{lemma}
	\label{heat.semigroup.tree} For any $\bp\in L^2(\Gamma)$ the linear semigroup is given by
	\begin{equation}\label{formula.S}
	(S(t)\bp )(x)=S^-(t)\bp+S^+(t) \Big(-\bp+\frac {2}{m+n}\begin{pmatrix}
	J_{n,n} & 0_{n,m} \\
	0_{m,n} & J_{m,m}
	\end{pmatrix}\bp + \frac 2{n+m}  \begin{pmatrix}
	0_{n,n}  & J_{n,m} \\
	J_{m,n} & 0_{m,m}
	\end{pmatrix}\widetilde \bp  \Big),
	\end{equation}
	where $J_{k,l}$ is the matrix having dimension  $k\times l$ with all the entries equal to one and $S^\pm(t)\bp$, $\bp=(\varphi_1,\dots, \varphi_{n+m})^T$, $\varphi_k:I_k\rightarrow\rr$,  being defined  as follows: for $x\in \rr_-^n\times \rr_+^m$
	\[
	(S^\pm(t){\bm{\varphi}})_k(x)=\int _{I_k} G_t(x\pm y)\varphi_k(y)dy, \, k=1,\dots, n+m,
	\]
	and the components of $\widetilde \bp$ are  $\widetilde \varphi_k(y) = \varphi_k(-y)$,  $k=1,\dots, n+m$.
\end{lemma}

\begin{proof}We remark that since the operator $\Delta_\Gamma:D(\Delta_{\Gamma})\rightarrow L^2(\Gamma)$ is maximal {dissipative it is sufficient to obtain the expression of the semi-group} for $\bp\in D(\dg)$.
	When all the segments are parametrized as $I_k=\rr_+$, $k=1,\dots, n+m$, the solution for $\bp\in D(\dg)$ is given by
	\[
	u_k(x,t)=\int_{I_k}(G_t(x-y)-G_t(x+y))\varphi_k(y)dy+\frac 2{m+n}\int_{\rr_+}G_t(x+y)
	\Big(\sum_{j=1}^{m+n} \varphi_j(y)\Big)dy,
	\]
	where $G_t$ is the one-dimensional heat kernel.
	In our case we can use even extensions of the functions defined on $\rr_-$, apply the above formula and then come back to our initial intervals. The solution of the linear case is then given by
	\begin{align*}
	\label{}
	u_k(x)=&\frac 2{m+n}	\Big(\int_{\rr_-}G_t(x+y)
	\sum_{i=1}^{n} \varphi_i(y)dy+\int_{\rr_+}G_t(x-y)
	\sum_{j=n+1}^{n+m} \varphi_j(y)dy\Big)\\
	&+\int_{I_k}(G_t(x-y)-G_t(x+y))\varphi_k(y)dy, \quad x\in I_k=\rr_-, \, k=1,\dots, n,
	\end{align*}
	and
	\begin{align*}
	\label{}
	u_l(x)=&\frac 2{m+n}	\Big(\int_{\rr_-}G_t(x-y)
	\sum_{i=1}^{n} \varphi_i(y)dy+\int_{\rr_+}G_t(x+y)
	\sum_{j=n+1}^{n+m} \varphi_j(y)dy\Big)\\
	&+\int_{I_l}(G_t(x-y)-G_t(x+y))\varphi_l(y)dy, \quad x\in I_l=\rr_+, l=n+1,\dots, n+m.
	\end{align*}
	Writing $\bp=(\bp^-,\bp^+)^T$ with $\bp^-=(\varphi_1,\dots,\varphi_{n}):\rr_-\rightarrow\rr$  and $\bp^+=(\varphi_{n+1},\dots,\varphi_{n+m}):\rr_+\rightarrow\rr$ we obtain the desired result.
\end{proof}

One of the facts that are specific to this type of problems on networks is the fact that the derivative does not commute with the semigroup (see an example in \cite{MR3834707}). We consider the operator  $\partial_x$ as a closed operator acting on $\widetilde H^1(\Gamma)$ or $H^1(\Gamma)$ with values in $L^2(\Gamma)$.

In fact one can prove that for any $\bp\in \widetilde  H^1(\Gamma)$ the following result holds.

\begin{lemma}\label{derivative.semigroup}For any $\bp\in \widetilde H^1(\Gamma)$, $S(t)\bp\in \widetilde H^1(\Gamma)$ and the following holds
	\begin{align*}
	\partial_x(S(t)\bp)=&S(t)\partial_x\bp+2K_t(x) \begin{pmatrix}
	- I_n  & 0 \\
	0 & I_m
	\end{pmatrix}\Bigg[I_{m+n}-\frac 1{m+n}J_{m+n,m+n}\Bigg]\bp(0)\\
	&+2S^+(t)
	\begin{pmatrix}
	I_n-\frac 2{m+n}J_{n,n} & 0 \\
	0 & I_m-\frac 2{m+n}J_{m,m}
	\end{pmatrix}\partial_x \bp
	\end{align*}
	where $K_t(x)={\rm diag}(G_t(x_k))_{k=1}^{n+m}$.
\end{lemma}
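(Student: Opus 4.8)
The plan is to differentiate the explicit representation of $S(t)\bp$ furnished by Lemma \ref{heat.semigroup.tree} and to transfer each $x$-derivative onto the integration variable by integration by parts, the price being boundary terms at the junction $y=0$. Writing $S(t)\bp=S^-(t)\bp+S^+(t)\bq$ with
\[
\bq=-\bp+\frac{2}{m+n}\begin{pmatrix}J_{n,n}&0\\0&J_{m,m}\end{pmatrix}\bp+\frac{2}{m+n}\begin{pmatrix}0&J_{n,m}\\J_{m,n}&0\end{pmatrix}\widetilde\bp ,
\]
I would handle the ``minus'' part using $\partial_x G_t(x-y)=-\partial_y G_t(x-y)$ and integrating by parts over each $I_k$. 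Since $\bp\in H^1(\Gamma)$ every $\varphi_k$ has a trace at $0$ and $\varphi_k(y)G_t(x-y)\to 0$ as $|y|\to\infty$, so only the junction endpoint survives, with opposite sign on incoming ($I_k=\rr_-$) and outgoing ($I_k=\rr_+$) edges; this produces
\[
\partial_x S^-(t)\bp=S^-(t)\partial_x\bp+K_t(x)\begin{pmatrix}-I_n&0\\0&I_m\end{pmatrix}\bp(0).
\]

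For the ``plus'' part I would instead use $\partial_x G_t(x+y)=\partial_y G_t(x+y)$, which gives $\partial_x S^+(t)\bq=-S^+(t)\partial_y\bq+\mathrm{boundary}$. The boundary contribution, computed as before but with reversed sign, equals $-K_t(x)\begin{pmatrix}-I_n&0\\0&I_m\end{pmatrix}\bq(0)$; using $\widetilde\bp(0)=\bp(0)$ together with $\begin{pmatrix}J_{n,n}&0\\0&J_{m,m}\end{pmatrix}+\begin{pmatrix}0&J_{n,m}\\J_{m,n}&0\end{pmatrix}=J_{m+n,m+n}$ collapses $\bq(0)$ to $\big[-I_{m+n}+\tfrac{2}{m+n}J_{m+n,m+n}\big]\bp(0)$, and adding the two boundary terms yields exactly the single expression $2K_t(x)\begin{pmatrix}-I_n&0\\0&I_m\end{pmatrix}\big[I_{m+n}-\tfrac1{m+n}J_{m+n,m+n}\big]\bp(0)$ of the statement.

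The decisive algebraic point is the interior term $-S^+(t)\partial_y\bq$, where the key identity is $\partial_y\widetilde\varphi_k=-\widetilde{\partial_x\varphi_k}$: differentiating the reflected cross-block flips its sign relative to the diagonal blocks, so
\[
\partial_y\bq=-\partial_x\bp+\frac{2}{m+n}\begin{pmatrix}J_{n,n}&0\\0&J_{m,m}\end{pmatrix}\partial_x\bp-\frac{2}{m+n}\begin{pmatrix}0&J_{n,m}\\J_{m,n}&0\end{pmatrix}\widetilde{\partial_x\bp}.
\]
Comparing $S^-(t)\partial_x\bp-S^+(t)\partial_y\bq$ with $S(t)\partial_x\bp$ — the latter obtained by applying Lemma \ref{heat.semigroup.tree} to $\partial_x\bp$ — the contributions carrying $\widetilde{\partial_x\bp}$ cancel while the diagonal blocks add, leaving precisely $2S^+(t)\begin{pmatrix}I_n-\tfrac{2}{m+n}J_{n,n}&0\\0&I_m-\tfrac{2}{m+n}J_{m,m}\end{pmatrix}\partial_x\bp$. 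Collecting the interior and boundary pieces produces the claimed formula; one may first verify this for smooth $\bp$ and then pass to the limit, since both sides depend continuously on $\bp\in H^1(\Gamma)$.

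I expect the main obstacle to be the careful bookkeeping of signs: the $-/+$ split between incoming and outgoing edges in the boundary terms and, above all, the reflection-induced sign flip $\partial_y\widetilde\varphi_k=-\widetilde{\partial_x\varphi_k}$, which is exactly what makes the cross-block terms cancel in the interior while doubling the diagonal blocks. A secondary technical point is the rigorous justification of the integration by parts — the vanishing of $\varphi_k(y)G_t(x\pm y)$ at infinity and the existence of the traces $\varphi_k(0)$ — which follows from $\bp\in H^1(\Gamma)$ and the Gaussian decay of $G_t$ for $t>0$.
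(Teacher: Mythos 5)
Your proposal is correct and follows essentially the same route as the paper: the paper's proof consists precisely of the two integration-by-parts identities $\partial_x S^{\mp}(t)\bp = \pm K_t(x)\,\mathrm{diag}(-I_n,I_m)\,\bp(0) \pm S^{\mp}(t)\partial_x\bp$ followed by the remark ``and then the formula,'' and you have carried out that omitted algebra — applying the $S^+$ identity to the combination $\bq$, using $\widetilde\bp(0)=\bp(0)$ and $\partial_y\widetilde\varphi_k=-\widetilde{\partial_x\varphi_k}$ to collapse the boundary terms and cancel the reflected cross-blocks — correctly. Nothing further is needed.
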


\begin{proof}
	Explicit computations of the derivative of $(S^{\pm}\bp)_{k}$ gives us that
	\[
	(\partial_x S^-(t) \bp ) (x)=K_t(x)\begin{pmatrix}
	-  I_n  & 0 \\
	0 &I_m
	\end{pmatrix}
	\bp(0)
	+(S^-(t) \partial_x \bp )(x)
	\]
	and
	\[
	(\partial_x S^+(t) \bp ) (x)=K_t(x)\begin{pmatrix}
	I_n  & 0 \\
	0 &- I_m
	\end{pmatrix}
	\bp(0)
	-(S^+(t) \partial_x \bp )(x).
	\]
	This give us that the right hand side belongs to $L^2(\Gamma)$. In view of \eqref{formula.S} we have $S(t)\bp\in \widetilde H^1(\Gamma)$ and  the 
	desired formula follows.
\end{proof}

In particular, when the continuity assumption at $x=0$ is assumed the above representation can be simplified.

\begin{lemma}
	\label{derivative.semigroup.continuity}
	For any $\bp\in H^1(\Gamma)$, $S(t)\bp\in  H^1(\Gamma)$ and the following holds
	\[
	\partial_x(S(t)\bp)=S(t)\partial_x\bp+2S^+(t)
	\begin{pmatrix}
	I_n-\frac 2{m+n}J_{n,n} & 0 \\
	0 & I_m-\frac 2{m+n}J_{m,m}
	\end{pmatrix}\partial_x \bp.
	\]
\end{lemma}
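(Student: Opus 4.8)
The plan is to obtain this statement as an immediate corollary of Lemma \ref{derivative.semigroup}. Comparing the two formulas, the sole discrepancy is the middle term of Lemma \ref{derivative.semigroup}, namely the one carrying $K_t(x)$ and the boundary value $\bp(0)$. Consequently the entire task reduces to showing that this term vanishes as soon as $\bp\in V$.

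First I would recall that membership $\bp\in V$ encodes precisely the continuity condition $\varphi_i(0)=\varphi_j(0)$ for all $1\le i,j\le m+n$. This means that the vector of boundary values $\bp(0)=(\varphi_1(0),\dots,\varphi_{m+n}(0))^T$ is a scalar multiple of the all-ones vector: writing $\mathbf{1}=(1,\dots,1)^T\in\rr^{m+n}$, we have $\bp(0)=c\,\mathbf{1}$ for some $c\in\rr$ (the common value at the junction).

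The key computation is then the action of the projection-type matrix $I_{m+n}-\frac{1}{m+n}J_{m+n,m+n}$ on $\mathbf{1}$. Since every row of $J_{m+n,m+n}$ consists of ones, one has $J_{m+n,m+n}\mathbf{1}=(m+n)\mathbf{1}$, and therefore
\[
\Big(I_{m+n}-\tfrac{1}{m+n}J_{m+n,m+n}\Big)\bp(0)=c\,\mathbf{1}-\tfrac{1}{m+n}\,c\,(m+n)\,\mathbf{1}=0.
\]
Substituting this into the formula of Lemma \ref{derivative.semigroup} makes the middle term vanish identically, and what survives is exactly the claimed identity.

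Since the derivation is a one-line algebraic simplification, there is no substantive obstacle here. The only point worth noting is that the diagonal sign matrix $\mathrm{diag}(-I_n,I_m)$ and the factor $K_t(x)$ appearing to the left of the bracket play no role: they multiply the vector $(I_{m+n}-\frac{1}{m+n}J_{m+n,m+n})\bp(0)$, which has already been shown to be zero. Hence the result follows at once.
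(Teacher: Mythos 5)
Your proposal is correct and follows exactly the paper's own route: both reduce the claim to showing that the term $2K_t(x)\,\mathrm{diag}(-I_n,I_m)\bigl[I_{m+n}-\tfrac{1}{m+n}J_{m+n,m+n}\bigr]\bp(0)$ from Lemma \ref{derivative.semigroup} vanishes when $\bp\in V$. Your version merely spells out the one-line justification (continuity at the junction gives $\bp(0)=c\,\mathbf{1}$ and $J_{m+n,m+n}\mathbf{1}=(m+n)\mathbf{1}$) that the paper leaves implicit.
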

\begin{proof}
	When $\bp\in H^1(\Gamma)$ we immediately have from the explicit representation  in \eqref{formula.S} that $S(t)\varphi\in C(\Gamma)$. The previos Lemma already gives us that $S(t)\varphi\in \widetilde H^1(\Gamma)$. Thus $S(t)\varphi\in  H^1(\Gamma)$.
	Moreover, in this case,  the  term containing  $\bp(0)$ that  appears   in \eqref{derivative.semigroup}  vanishes:
	\[
	2K_t(x) \begin{pmatrix}
	- I_n  & 0 \\
	0 & I_m
	\end{pmatrix}\Bigg[I_{m+n}-\frac 1{m+n}J_{m+n}\Bigg]\bp(0)=0_{m+n}
	\]
	and then the result follows.
\end{proof}

\subsection*{Acknowledgments} C. M. C. and L. I. were partially supported by  a grant of Ministery of Research and Innovation, CNCS-UEFISCDI, project number PN-III-P1-1.1-TE-2016-2233, within PNCDI III for the period 2017-2020. The work of L.I. in 2021 has not been supported by any grant of CNCS-UEFISCDI.

A. F. P. was partially supported by CNPq (Brazil) and Agence Universitaire de la Francophonie.

J. D. R. was partially supported by CONICET grant PIP GI No 11220150100036CO
(Argentina), by  UBACyT grant 20020160100155BA (Argentina) and by MINCYT PICT-2018-03183 (Argentina).

We want to warmly thank the referee for his/her comments that helped us to improve our manuscript. The authors also thank to D. Mugnolo for clarifying some results used in the revised version of the manuscript.

\bibliographystyle{plain}

\begin{thebibliography}{10}

\bibitem{MR3799048}
K. Ammari and E. Cr\'{e}peau.
\newblock Feedback stabilization and boundary controllability of the
  {K}orteweg--de {V}ries equation on a star-shaped network.
\newblock {\em SIAM J. Control Optim.}, 56(3):1620--1639, 2018.

\bibitem{MR3834707}
J.~Angulo~Pava and N.~Goloshchapova.
\newblock On the orbital instability of excited states for the {NLS} equation
  with the {$\delta$}-interaction on a star graph.
\newblock {\em Discrete Contin. Dyn. Syst.}, 38(10):5039--5066, 2018.

\bibitem{MR3013208}
G. Berkolaiko and P. Kuchment.
\newblock {\em Introduction to quantum graphs}, volume 186 of {\em Mathematical
  Surveys and Monographs}.
\newblock American Mathematical Society, Providence, RI, 2013.

\bibitem{MR2759829}
H.~Brezis.
\newblock {\em Functional analysis, {S}obolev spaces and partial differential
  equations}.
\newblock Universitext. Springer, New York, 2011.

\bibitem{MR1440033}
A.~Carpio.
\newblock Large time behavior in convection-diffusion equations.
\newblock {\em Ann. Scuola Norm. Sup. Pisa Cl. Sci. (4)}, 23(3):551--574, 1996.

\bibitem{MR2679594}
G.~M. Coclite and M.~Garavello.
\newblock Vanishing viscosity for traffic on networks.
\newblock {\em SIAM J. Math. Anal.}, 42(4):1761--1783, 2010.

\bibitem{MR3468916}
C.~M. Dafermos.
\newblock {\em Hyperbolic conservation laws in continuum physics}, volume 325
  of {\em Grundlehren der Mathematischen Wissenschaften [Fundamental Principles
  of Mathematical Sciences]}.
\newblock Springer-Verlag, Berlin, fourth edition, 2016.

\bibitem{MR2169126}
R.~D\'{a}ger and E.~Zuazua.
\newblock {\em Wave propagation, observation and control in {$1\text{-}d$}
  flexible multi-structures}, volume~50 of {\em Math\'{e}matiques \&
  Applications (Berlin) [Mathematics \& Applications]}.
\newblock Springer-Verlag, Berlin, 2006.

\bibitem{MR1124296}
M.~Escobedo and E.~Zuazua.
\newblock Large time behavior for convection-diffusion equations in {${\bf
  R}^N$}.
\newblock {\em J. Funct. Anal.}, 100(1):119--161, 1991.

\bibitem{MR1233647}
M. Escobedo, J.~L. V\'{a}zquez, and E. Zuazua.
\newblock Asymptotic behavior and source-type solutions for a
  diffusion-convection equation.
\newblock {\em Arch. Rational Mech. Anal.}, 124(1):43--65, 1993.

\bibitem{MR1266100}
M. Escobedo, J.~L. V\'{a}zquez, and E. Zuazua.
\newblock A diffusion-convection equation in several space dimensions.
\newblock {\em Indiana Univ. Math. J.}, 42(4):1413--1440, 1993.

\bibitem{MR2597943}
L.~C. Evans.
\newblock {\em Partial differential equations}, volume~19 of {\em Graduate
  Studies in Mathematics}.
\newblock American Mathematical Society, Providence, RI, second edition, 2010.

\bibitem{MR2656972}
M.-H. Giga, Y.~Giga, and J.~Saal.
\newblock {\em Nonlinear partial differential equations}, volume~79 of {\em
  Progress in Nonlinear Differential Equations and their Applications}.
\newblock Birkh\"{a}user Boston, Inc., Boston, MA, 2010.
\newblock Asymptotic behavior of solutions and self-similar solutions.

\bibitem{MR1304494}
E.~Godlewski and P.-A. Raviart.
\newblock {\em Hyperbolic systems of conservation laws}, volume 3/4 of {\em
  Math\'{e}matiques \& Applications (Paris) [Mathematics and Applications]}.
\newblock Ellipses, Paris, 1991.

\bibitem{haeseler}
S.~Haeseler.
\newblock Heat kernel estimates and related inequalities on metric graphs.
\newblock \url{https://arxiv.org/abs/1101.3010}.

\bibitem{MR3443431}
H.~Holden and N.~H. Risebro.
\newblock {\em Front tracking for hyperbolic conservation laws}, volume 152 of
  {\em Applied Mathematical Sciences}.
\newblock Springer, Heidelberg, second edition, 2015.

\bibitem{MR3190994}
L.~I. Ignat and A.~F. Pazoto.
\newblock Large time behavior for a nonlocal diffusion--convection equation
  related with gas dynamics.
\newblock {\em Discrete Contin. Dyn. Syst.}, 34(9):3575--3589, 2014.

\bibitem{rossi2020}
L.~I. Ignat, J.D. Rossi, and A.~San~Antolin.
\newblock Asymptotic behavior for local and nonlocal evolution equations on
  metric graphs with some edges of infinite length. To appear in
\newblock {\em Annali di Matematica Pura ed Applicata} (4) 200 (2021), no. 3, 1301–1339.
\newblock \url{https://doi.org/10.1007/s10231-020-01039-5}.

\bibitem{Karch2011}
G.~Karch.
\newblock Self-similar asymptotics in evolution equations.
\newblock 2011.
\newblock \url{http://ssdnm.mimuw.edu.pl/pliki/wyklady/karch_lectures.pdf}.

\bibitem{MR2291812}
D.~Mugnolo.
\newblock Gaussian estimates for a heat equation on a network.
\newblock {\em Netw. Heterog. Media}, 2(1):55--79, 2007.

\bibitem{mugnolo}
D.~Mugnolo.
\newblock {\em Semigroup methods for evolution equations on networks.},  
  {\em Understanding Complex Systems.}.
\newblock Springer, Cham, 2014.



\bibitem{craig}
Se\'an M.~Stewart,
\newblock Some alternative derivations of Craig's formula.
\newblock {\em Math. Gaz. 101 (2017)}, no. 551, 268--279.

\bibitem{temam}
R.~Temam,
\newblock {\em Navier-Stokes Equations: Theory and Numerical Analysis}, volume~19 of {\em Graduate
  Studies in Mathematics}.
\newblock    AMS Chelsea Publishing, RI, 2001.

\bibitem{zuazua2020asymptotic}
E. Zuazua.
\newblock Asymptotic behavior of scalar convection-diffusion equations, 2020.
\newblock \url{https://arxiv.org/abs/2003.11834v1}.

\end{thebibliography}

\end{document}